\newcommand{\R}{{\mathbb R}}
\newcommand{\Z}{{\mathbb Z}}
\newcommand{\cH}{{\mathcal H}}
\newtheorem{thm}{Theorem}[section]
\newtheorem{cor}[thm]{Corollary}
\newtheorem{lem}[thm]{Lemma}
\newtheorem{definition}[thm]{Definition}
\newtheorem{remark}[thm]{Remark}
\newtheorem{proposition}[thm]{Proposition}
\theoremstyle{definition}
\title[]{Continuous orbit equivalence rigidity for left-right wreath product actions}
\author{Yongle Jiang}
\address{Y. Jiang, School of Mathematical Sciences, Dalian University of Technology, Dalian, 116024, China}
\email{yonglejiang@dlut.edu.cn}
\begin{document}
\date{\today}

\begin{abstract}
Drimbe and Vaes proved an orbit equivalence superrigidity theorem for left-right wreath product actions in the measurable setting. We establish the counterpart result in the topological setting for continuous orbit equivalence. This gives us minimal, topologically free actions that are continuous orbit equivalence superrigid. One main ingredient for the proof is to show continuous cocycle superrigidity for certain generalized full shifts, extending our previous result with Chung. 
\end{abstract}

\subjclass[2010]{Primary 37A20, Secondary 37B05, 20F65}

\keywords{continuous orbit equivalence, rigidity, wreath product, generalized full shifts, Liv\u{s}ic theorem}

\maketitle

\section{Introduction}

In this paper, all our groups are assumed to be countable and discrete, and all our topological spaces are compact and Hausdorff. By a \textit{continuous action}, we mean an action of a group on a topological space by homeomorphisms. 

Continuous orbit equivalence (see Definition \ref{def: coe}) for general countable discrete group actions was formally introduced in Li's paper \cite{LX} about seven years ago. Roughly speaking, two continuous actions on compact Hausdorff spaces are said to be continuously orbit equivalent if we can identify their orbits in a continuous way. 
This is a weaker notion than topological conjugacy and
several connections between it and other topics, e.g. geometric group theory \cite{LX_agt} and C$^*$-algebras \cite{LX}, are quickly discovered by Li. 

By contrast, its counterpart in the measurable setting, i.e. orbit equivalence theory has a relatively long history, which emerges after the pioneering work of Dye \cite{dye}. During the last decades, much attention has been put on finding 
orbit equivalence superrigid actions, i.e. actions whose orbit equivalence classes consist only of themselves up to measurable conjugacy. Several impressive orbit equivalence superrigidity results have been discovered in \cites{Fur99a, Fur99b, MS02, Ki06,Io08,PV_adv,PV08,Ki09,PS09,Io14,TD14,CK15,Dr15,GITD16, Dr20, DIP, BTD, GH, HenH, HH1, HH2, HH3}.

Motivated by the success in finding orbit equivalence superrigid actions in the measurable setting, 
people also try to study the analogue in the topological setting. Until now, there are quite a few known rigidity results for continuous orbit equivalence, e.g. \cites{BT, SC95, LX, CM, cj, cohen, jiang_pams, jiang_ggd, GPS}. Nevertheless, in almost all of these results, either the source actions are not minimal as they contain fixed points or extra assumptions are put on the acting groups of the target actions. A possible exception is \cite[Corollary 6.5]{cj}, but we do not know whether it could be applied to minimal subshifts.
Hence, none of these actions are explicit minimal actions that are continuous orbit equivalence superrigid. Here, we say a continuous action $\Gamma\curvearrowright X$ is \textit{continuous orbit equivalence superrigid} if any topological free continuous action (see Definition \ref{def: various notions on t.d.s.}) of a group $\Lambda$ on a compact Hausdorff space $Y$ which is continuously orbit equivalent to it is actually topologically conjugate to it.

Recently, Drimbe and Vaes studied left-right wreath product group actions, more generally,  left-right wreath product equivalence relations $\mathcal{R}$ in the measurable setting \cite[Theorem E]{dv}. They were able to completely determine all essential free group actions $G\curvearrowright (Y,\eta)$ that are stably orbit equivalent to $\mathcal{R}$, i.e. the orbit equivalence relation $\mathcal{R}(G\curvearrowright Y)$ is isomorphic to the amplified equivalence relation $\mathcal{R}^t (t>0)$ in the sense of \cite[Definition 2.2]{Fur99b}, where the terminology of weak orbit equivalence was used.

Our goal in this paper is to extend the above mentioned result (with $t=1$) to the topological setting by considering continuous orbit equivalence. In particular, we present minimal, topologically free actions that are continuous orbit equivalence superrigid. More precisely, we prove the following result.   

\begin{thm}\label{thm: main thm for the whole paper}
Let $p$ be a prime number and $\Gamma_1$ be a finitely generated, non-torsion, non-amenable and icc group, e.g. $\Gamma_1=\mathbb{F}_n$, the free group with $n\geq 2$ generators. Let $\alpha$ be the left-right wreath product action $\frac{\Z}{p\Z}\wr_{\Gamma_1}(\Gamma_1\times \Gamma_1)\curvearrowright X:=(\frac{\Z}{p\Z})^{\Gamma_1}$ defined using the left translation $\frac{\Z}{p\Z}\curvearrowright \frac{\Z}{p\Z}$ and the left-right translation $\Gamma_1\times \Gamma_1\curvearrowright \Gamma_1$ (see Definition \ref{def: generalized wreath product actions} and \S  \ref{subsection: left-right translation and generalized full/Bernoulli shift  actions}). Then $\alpha$ is topologically free and minimal (see Definition \ref{def: various notions on t.d.s.}). Moreover, it is also a continuous orbit equivalence superrigid action, i.e. if it is continuously orbit equivalent to a topologically free action of a countable discrete group on a compact Hausdorff space, then the two actions are topologically conjugate.
\end{thm}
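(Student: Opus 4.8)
The plan is to follow the strategy of Drimbe--Vaes in the measurable setting, replacing each measure-theoretic input by its topological analogue. Suppose $\beta\colon \Lambda\curvearrowright Y$ is a topologically free action on a compact Hausdorff space which is continuously orbit equivalent to $\alpha$; write $\Gamma:=\frac{\Z}{p\Z}\wr_{\Gamma_1}(\Gamma_1\times\Gamma_1)$ and let $c\colon \Gamma\times X\to\Lambda$ be the associated continuous orbit equivalence cocycle (coming from the orbit-identifying homeomorphism $X\cong Y$, via Definition~\ref{def: coe}). First I would restrict attention to the base group $B:=\bigoplus_{\Gamma_1}\frac{\Z}{p\Z}$ sitting inside $\Gamma$, so that $B\curvearrowright X$ is exactly the generalized full shift $\Gamma_1\curvearrowright(\frac{\Z}{p\Z})^{\Gamma_1}$ twisted appropriately; the key analytic input is the continuous cocycle superrigidity theorem for such generalized full shifts announced in the abstract (extending the Chung--Jiang result), which forces the restriction $c|_{B\times X}$ to be continuously cohomologous to a group homomorphism $\delta_0\colon B\to\Lambda$. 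The main obstacle, as in \cite{dv}, will be to upgrade this cohomology on the base group to a genuine continuous conjugacy: one must show that, after adjusting by a continuous coboundary, $c$ itself becomes (on the nose) a continuous homomorphism $\delta\colon\Gamma\to\Lambda$ and that the orbit map intertwines $\alpha$ with $\beta\circ\delta$, with $\delta$ an isomorphism.

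To carry this out I would proceed in the following steps. (1) Using the cocycle identity and minimality of $\alpha$, show that the homomorphism $\delta_0$ on the base $B$ is canonical and that $c$ is cohomologous to a cocycle that is \emph{independent of the $X$-variable on $B$}; then exploit the commutation relations in the left-right wreath product --- in particular that $\Gamma_1\times\Gamma_1$ acts on $B=\bigoplus_{\Gamma_1}\frac{\Z}{p\Z}$ by the left-right Bernoulli shift with the two $\Gamma_1$-factors acting by commuting copies --- to force $\delta_0$ to be $(\Gamma_1\times\Gamma_1)$-equivariant for a corresponding action on $\Lambda$. This is the step where the icc, non-amenable, non-torsion hypotheses on $\Gamma_1$ enter: icc is needed so that the only equivariant structure on the target is the ``obvious'' one (ruling out unwanted direct-product corrections, as in the measurable malleability arguments), non-amenability feeds the cocycle superrigidity input, and non-torsion guarantees the relevant commensurated subgroup structure behaves as in \cite{dv}. (2) From the structure of $\ker\delta_0$ and the way $\Gamma_1\times\Gamma_1$ normalizes $B$, reconstruct a continuous homomorphism $\delta$ on all of $\Gamma$ extending $\delta_0$ (up to inner), check $\delta$ is injective using topological freeness of $\beta$ and surjective using minimality of $\beta$ together with the fact that $\alpha$ is minimal. (3) Finally, turn the statement ``$c$ is cohomologous to the homomorphism $\delta$'' into a topological conjugacy: the continuous transfer function $b\colon X\to\Lambda$ with $c(g,x)=b(\alpha_g x)\delta(g)b(x)^{-1}$ allows one to correct the orbit homeomorphism $X\cong Y$ to a homeomorphism $\varphi\colon X\to Y$ satisfying $\varphi(\alpha_g x)=\beta_{\delta(g)}\varphi(x)$, i.e. a topological conjugacy of $\alpha$ with $\beta$ along $\delta$.

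A few points deserve care. One must verify that the cocycle superrigidity theorem for generalized full shifts genuinely applies to the \emph{base-group} subaction inside the left-right wreath product (the left-right feature means $B$ is acted on by $\Gamma_1\times\Gamma_1$, not just $\Gamma_1$, so one should phrase cocycle superrigidity for the left-right Bernoulli shift $\Gamma_1\times\Gamma_1\curvearrowright(\frac{\Z}{p\Z})^{\Gamma_1}$, or reduce to one factor by using commutation). Also, passing from a measurable-style ``cohomologous to a homomorphism'' conclusion to an actual \emph{continuous} conjugacy requires that all transfer functions produced along the way be genuinely continuous, which is where the Liv\v{s}ic-type rigidity alluded to in the keywords is used; in the topological category there is no room for null sets, so minimality and topological freeness must be invoked to pin down equalities everywhere rather than almost everywhere.

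The hardest part I expect is step (1): controlling the equivariant structure of $\delta_0$ and ruling out ``exotic'' homomorphisms of the base group into $\Lambda$ compatible with the $\Gamma_1\times\Gamma_1$-action. In the measurable setting this is handled by Popa-style deformation/rigidity and intertwining-by-bimodules; in the topological setting one has to find an elementary substitute, presumably combining the explicit combinatorics of the generalized full shift (finiteness of $\frac{\Z}{p\Z}$, the product structure of $X$), the minimality of $\alpha$, and a careful analysis of how the continuous cocycle $c$ behaves on the generating set of $\Gamma$.
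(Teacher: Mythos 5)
Your proposal has the right broad shape (combine a continuous cocycle superrigidity input for generalized full shifts with a Drimbe--Vaes style untwisting argument), but two concrete gaps would derail it.

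First, you apply cocycle superrigidity in the wrong place. You propose to restrict the orbit cocycle to the base group $B=\bigoplus_{\Gamma_1}\frac{\Z}{p\Z}$ and conclude that $c|_{B\times X}$ is continuously cohomologous to a homomorphism $\delta_0\colon B\to\Lambda$. But $B$ acts on $X=(\frac{\Z}{p\Z})^{\Gamma_1}$ by coordinate-wise translation, not by permuting coordinates, so $B\curvearrowright X$ is \emph{not} the generalized full shift; it is the equicontinuous translation action of a compact abelian group on itself, which is far from cocycle superrigid. The generalized full shift that is actually superrigid (via Theorem~\ref{thm: ccsr for generalized full shifts} and Corollary~\ref{main corollary}(i)) is the $\Gamma_1\times\Gamma_1$-subaction permuting coordinates. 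That is exactly what the paper does in step~1 of the proof of Theorem~\ref{thm: main thm for generalized wreath product actions}: untwist $c$ so that $\omega(g,x)=\delta(g)$ for $g\in\Gamma_1\times\Gamma_1$, then analyze $\omega$ on the base group via topological weak mixing of $\mathrm{Stab}(i)\curvearrowright X_0^{I\setminus\{i\}}$ to show $\omega(\lambda_{(i)},x)$ depends only on $x_i$, and from there build an auxiliary wreath product $G_2=\Lambda_0\wr_I\Gamma$ and a homomorphism $\delta\colon G_2\to G$. Your ``restrict to $B$ first, get a homomorphism on $B$, then extend'' step is not available and not what the argument does.

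Second, you never use the hypothesis that $p$ is prime, but the conclusion is false without it. Theorem~\ref{thm: main thm for generalized wreath product actions} alone only gives that $G\curvearrowright Y$ is conjugate to some $\Lambda_0\wr_I\Gamma\curvearrowright X_0^I$ with $X_0\rtimes\Lambda\cong X_0\rtimes_\rho\Lambda_0$ as topological groupoids; a priori $\Lambda_0$ and $\rho$ may differ from $\frac{\Z}{p\Z}$ acting by translation. The primality of $p$ is used in Corollary~\ref{main corollary2}: one deduces $|\Lambda_0|=|\Lambda|=p$ and uses that any free action of a group of order $p$ on a $p$-point set is conjugate to $\frac{\Z}{p\Z}\curvearrowright\frac{\Z}{p\Z}$. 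Your plan jumps directly to ``$\delta$ is an isomorphism and the actions are conjugate'' without this comparison step. Relatedly, the hard injectivity/surjectivity verifications for $\delta\colon G_2\to G$ in the paper go through a C$^*$-algebraic tensor-product argument (Lemma~\ref{lem: fixed point algebra under the wm assumption}) and an explicit computation with the corrected orbit map; you gesture at minimality and topological freeness but do not supply any substitute for these steps.
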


Recall that $\Gamma_1$ is called icc if every non-trivial element in it has an infinite conjugacy class. Towards proving such a theorem, a key step is to establish the continuous cocycle superrigidity theorem for generalized full shifts (see \S   \ref{subsection: left-right translation and generalized full/Bernoulli shift  actions}). Recall that in \cite{cj}, together with Chung, we proved a unified topological version of Popa's two celebrated cocycle superrigidity theorems \cite{popa_cocycle1, popa_cocycle2} for full shifts. However, Popa's theorems actually hold true for certain generalized Bernoulli shifts. Thus, the following result (see Corollary \ref{main corollary} (ii)) might be natural to expect.

\begin{thm}\label{thm: main typical thm for ccsr}
Let $\Gamma_1$ be a finitely generated, non-torsion and non-amenable group and $X_0$ be any finite set with more than one element. Then the generalized full shift action $\Gamma:=\Gamma_1\times \Gamma_1\curvearrowright X:=X_0^{\Gamma_1}$ defined using the left-right translation $\Gamma_1\times \Gamma_1\curvearrowright \Gamma_1$ is a continuous cocycle superrigid action, i.e. for any countable discrete group $G$, every continuous cocycle $c: \Gamma\times X\to G$ is trivial, i.e. it is  cohomologous to a group homomorphism $\phi: \Gamma
\to G$ via a continuous transfer map $b: X\to G$ (see \S \ref{subsection: continuous csr/oe superrigidity} for precise definition).  
\end{thm}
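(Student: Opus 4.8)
The plan is to follow the strategy of Chung--Jiang for ordinary full shifts, replacing the group $\Gamma_1$ acting on itself by left translation with the product group $\Gamma = \Gamma_1\times\Gamma_1$ acting on $\Gamma_1$ by the left-right translation $(g,h)\cdot k = gkh^{-1}$. First I would set up the topological Popa-type machinery: the action $\Gamma\curvearrowright X_0^{\Gamma_1}$ admits a natural commuting "pair of s-malleable deformations'' coming from the two factors, and the key point is that the stabilizer of a point $k\in\Gamma_1$ under the left-right action is the diagonal-type subgroup $\{(g,h): gkh^{-1}=k\}$, which is isomorphic to $\Gamma_1$ and is non-amenable and relatively "large.'' I would verify that this action has the relevant topological weak-mixing / spectral-gap-type properties needed to run the cocycle-untwisting argument: here the analogue of Popa's "$w$-rigidity'' or "spectral gap'' is supplied by the non-amenability of $\Gamma_1$ together with a suitable mixing property of the left-right Bernoulli action (the orbit of any nontrivial finite subset of $\Gamma_1$ under $\Gamma$ is infinite, which is where the icc assumption on $\Gamma_1$ enters).

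The core of the argument is a Liv\v{s}ic-type rigidity step, exactly as in \cite{cj}: given a continuous cocycle $c:\Gamma\times X\to G$, one first shows that the restriction of $c$ to a suitable "deformation direction'' must be cohomologous to something that does not depend on the $X$-variable. Concretely, I would use the shift structure: for a subgroup $H\leq\Gamma$ that acts on $X$ in a way that "moves'' only finitely many coordinates at a time (or, dually, for the flip automorphisms of $X_0^{\Gamma_1}\times X_0^{\Gamma_1}$ that interpolate between the identity and the coordinate-flip), continuity of $c$ and a Baire-category / clopen-partition argument force the cocycle to be locally constant in a controlled way. Combining this across the two commuting directions $\Gamma_1\times 1$ and $1\times\Gamma_1$ and using that their "joint displacement'' generates enough of $X$, one upgrades local constancy to: $c$ is continuously cohomologous to a cocycle $c'$ that factors through the orbit map, i.e. $c'(\gamma,x)$ depends only on $\gamma$ and finitely many "boundary'' coordinates, and then the icc + non-amenability of $\Gamma_1$ (ruling out nontrivial finite orbits and providing the needed rigidity) forces $c'(\gamma,x)=\phi(\gamma)$ for a homomorphism $\phi$.

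More precisely, I would carry out the steps in the following order. (1) Record the basic structure of the left-right Bernoulli action: topological freeness/essential freeness, minimality, and the "finite-support'' description of how group elements act on cylinder sets. (2) Prove the key mixing lemma: for every finite $F\subseteq\Gamma_1$ the set $\{(g,h)\in\Gamma : gFh^{-1}\cap F\neq\emptyset\}$ is "small'' in the sense needed (this is where $\Gamma_1$ being icc and non-torsion is used, analogous to the weak-mixing hypothesis in Popa's theorem). (3) Show that any continuous cocycle $c$ is cohomologous to one that is "locally rigid'' in each of the two commuting deformation directions, via the topological Liv\v{s}ic/Baire-category argument of \cite{cj}. (4) Patch the two directions together using that $\Gamma_1\times\Gamma_1 = (\Gamma_1\times 1)\cdot(1\times\Gamma_1)$ and a cocycle identity, to conclude $c$ is cohomologous to a cocycle taking values determined by finitely many coordinates. (5) Use minimality and the mixing lemma once more to eliminate the coordinate dependence and conclude the resulting cocycle is a genuine homomorphism $\phi:\Gamma\to G$.

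The main obstacle I anticipate is step (3)--(4): in the measurable setting one has $L^2$-spectral-gap tools and can average, but in the topological/continuous setting one must instead extract the cohomology from a delicate clopen-partition and Baire-category argument, and making this work uniformly across the two independent left and right deformation directions — rather than the single direction handled in \cite{cj} — is the technically substantive point. A secondary subtlety is that the stabilizers for the left-right action are nontrivial (unlike the free left-translation action underlying ordinary full shifts), so one must check that the non-amenable stabilizer does not obstruct the untwisting; this is precisely where the non-amenability hypothesis on $\Gamma_1$ is doing real work, and I would isolate it as a separate lemma.
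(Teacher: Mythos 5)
You correctly sense that a Liv\v{s}ic-type rigidity argument is at the heart of the proof and that topological weak mixing, non-amenability and the icc hypothesis all play a role, but the concrete mechanism you outline is not the paper's and would likely stall exactly where you flag the difficulty. The central missing idea is that the paper does \emph{not} try to build Popa-type machinery (malleable deformations, spectral gap) in the topological category, nor does it generalize the clopen-partition argument of \cite{cj} directly to the left--right shift. As the introduction explains, that route is blocked: the stabilizer $\mathrm{Stab}(i)$ under the left--right action is never a commensurated (let alone normal) subgroup of $\Gamma_1\times\Gamma_1$ when $\Gamma_1$ is icc, and the \cite{cj}/\cite{jiang_pams}-style topological untwisting relies essentially on such a condition. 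Instead, the paper imports Popa's \emph{measurable} cocycle superrigidity theorem wholesale (this is condition (3) of Theorem~\ref{thm: ccsr for generalized full shifts}) to obtain a measurable transfer map $b: X\to G$, and the Liv\v{s}ic step consists precisely in showing that $b$ agrees $\mu$-a.e.\ with a continuous map. That is where Lusin's theorem, Birkhoff's ergodic theorem for the subaction $\langle s\rangle\curvearrowright(X,\mu)$, and the homoclinic relations $\Delta_{X,s}^{\pm}$, $\Theta_{X,s}^{\pm}$ enter. Your proposal never mentions this measurable-to-continuous upgrade; a purely topological untwisting is the thing the authors say they could not make work.

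A second substantive divergence concerns how the acting group is handled. You propose to decompose $\Gamma_1\times\Gamma_1$ into the two commuting factors $\Gamma_1\times 1$ and $1\times\Gamma_1$ and ``patch across the two deformation directions.'' The paper does something different: it fixes a \emph{single} infinite-order element $s=(g_0,e)$ and then chooses, via the compression function $\rho_{g_0}$, an exhausting sequence $F_n\Subset I$ (balls of radius $\tfrac{\rho_{g_0}(n)}{8}+R$) so that the forward and backward cones $\bigcup_{n\ge 0}\bigcup_{f\in F_n}s^{-n}f$ and $\bigcup_{n\ge 0}\bigcup_{f\in F_n}s^{n}f$ have finite intersection $E$. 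This finite set $E$ is what makes the gluing map $p_v$ in Claim~4 of the proof of Theorem~\ref{thm: ccsr for generalized full shifts} absolutely continuous and lets the Walters/Pollicott--Walkden-type argument run for a non-cyclic acting group. Without this ingredient, or a concrete substitute, your steps (3)--(5) remain genuinely open: you have identified the hard part but not supplied the idea that closes it.
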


For a more general version, see Theorem \ref{thm: ccsr for generalized full shifts}. Let us discuss the proof of Theorem \ref{thm: main typical thm for ccsr} briefly.

At first glance, it seems routine to apply the method in \cite{cj} to deal with all generalized full shifts, but it is not completely clear to us how to do this, see the discussion in the last section in \cite{jiang_pams}. Putting the problem in a general context, observe that the generalized full shift $\Gamma\curvearrowright X_0^I$, where $\Gamma\curvearrowright I$ is a transitive action, may be treated as the coinduced action (see \cite[Definition 6.18]{KL_book}) from the trivial action of the stabilizer subgroup $Stab(i)\curvearrowright X_0$ to $\Gamma$, where $i\in I$ is any chosen point. 
By extending the method in \cite{cj}, we did prove a cocycle superrigidity theorem for certain coinduced actions in \cite{jiang_pams}, which could be applied to some generalized full shifts $\Gamma\curvearrowright X_0^I$, see e.g. Corollary \ref{main corollary} (i) for a concrete example. Unfortunately, the result in \cite{jiang_pams} could not be applied to the generalized full shifts in Theorem \ref{thm: main typical thm for ccsr}. The reason is that we assumed
$Stab(i)$ is a commensurated subgroup of $\Gamma$ in \cite{jiang_pams}. 
Recall that a subgroup $K$ is called \textit{commensurated} in $\Gamma$ if $gKg^{-1}\cap K$ has finite index in $K$ for all $g\in \Gamma$.
It is easy to check that for the left-right translation action $\Gamma=\Gamma_1\times \Gamma_1\curvearrowright I:=\Gamma_1$, this condition is never satisfied whenever $\Gamma_1$ is icc.
However, the icc assumption on $\Gamma_1$ is needed while proving Theorem \ref{thm: main thm for the whole paper} from Theorem \ref{thm: main typical thm for ccsr}.

To prove Theorem \ref{thm: main typical thm for ccsr}, our strategy can be described as follows. First, we observe that every continuous cocycle $c$ 
is cohomologous to a group homomorphism $\phi: \Gamma\to G$ via a \textit{measurable} transfer map $b: X\to G$ by applying Popa's cocycle superrigidity theorem. Then, we show that $b$ can be replaced by  a \textit{continuous} transfer map by modifying an argument used in the proof of the famous Liv\u{s}ic theorem. Let us briefly recall the classical Liv\u{s}ic theorem below.

For a compact Riemannian manifold $M$ with an Anosov diffeomorphism $T$ on it, Liv\u{s}ic \cite{liv1, liv2} proved a seminal result. It says that for any H\"older continuous map $f: M\to \R$, if there exists a measurable solution $u: M\to \R$ to the coboundary equation $f=u\circ T-u$ a.e. (with respect to a suitable measure $\mu$ on $M$), then there is a H\"older continuous solution $u'$, i.e. $f=u'\circ T-u'$ everywhere such that $u=u'$ $\mu$-a.e. Inspired by this result, people try to generalise it along two directions. One is to consider wider class of actions, e.g. partially hyperbolic actions, and the other one is to consider larger class of target groups than $\R$, see e.g. \cite{np, pp, pw, py, quas, SC}. Nevertheless, we are not aware of any work on extending this result to more general acting groups than $\mathbb{Z}$. Hence the combination of Popa's cocycle superrigidity theorem with a Liv\u{s}ic type argument used in our approach may be novel for dealing with non-cyclic acting groups.

The paper is organized as follows. In Section \ref{section: preliminaries}, we recall the definition of various notions used in this paper, including continuous orbit equivalence and related continuous cocycle superrigidity, generalized wreath product groups, left-right translation actions, generalized full/Bernoulli shifts and compression functions. In Section \ref{section: ccs for generalized full shifts}, we prove the continuous cocycle superrigidity for certain generalized full shifts, i.e. Theorem \ref{thm: ccsr for generalized full shifts}, and deduce Theorem \ref{thm: main typical thm for ccsr}  from Corollary \ref{main corollary} (ii). In Section \ref{section: coe type result for generalized wreath products}, we classify topological free actions that are continuously orbit equivalent to certain generalized wreath product actions, i.e. Theorem \ref{thm: main thm for generalized wreath product actions}, from which Theorem \ref{thm: main thm for the whole paper} follows easily. Finally, we ask several questions in Section \ref{section: ending questions}.

\section{Preliminaries}\label{section: preliminaries}

In this section, we recall various notions on groups and actions used in this paper.

\subsection{Generalized wreath product groups}

Let $\Gamma$ and $\Lambda$ be groups and $\Gamma\curvearrowright I$ be a group action on an index set $I$. The \textit{generalized wreath product group} $\Lambda\wr_I\Gamma$ is defined as the semi-direct product $(\oplus_I\Lambda)\rtimes \Gamma$, where $\Gamma$ acts on $\oplus_I\Lambda$ by the automorphisms $(\gamma \lambda)_i=\lambda_{\gamma^{-1}i}$, where $\gamma\in \Gamma$, $i\in I$ and $\lambda\in \oplus_I\Lambda$.
\subsection{Left-right translation and generalized full/Bernoulli shift actions}\label{subsection: left-right translation and generalized full/Bernoulli shift  actions}

Let $\Gamma$ be a group. The \textit{left-right translation action} $\Gamma\times \Gamma\curvearrowright \Gamma$ is given by $(s, t)g=sgt^{-1}$ for all $s, t, g\in \Gamma$.  In general, let $\Gamma\curvearrowright I$ be an action on an index set $I$ and $X_0$ be a finite set with at least two elements equipped with the discrete topology. The \textit{generalized full shift} $\Gamma\curvearrowright X_0^I$ is the continuous action given by $(\gamma x)_i=x_{\gamma^{-1}i}$, where $x=(x_i)_{i\in I}\in X_0^I$ and $\gamma\in \Gamma$. If $\Gamma\curvearrowright I$ is the left translation $\Gamma\curvearrowright \Gamma$, then this generalized full shift is just called a \textit{full shift}.
If the product space $X_0^I$ is equipped with a product measure $\mu_0^I$ defined on its Borel $\sigma$-algebra, where $\mu_0$ is a probability measure on $X_0$, then  $\Gamma\curvearrowright (X_0^I,\mu_0^I)$ is usually called a \textit{generalized Bernoulli shift}.

\subsection{Topologically free, minimal, transitive and topologically weakly mixing actions}

We need the following standard concepts on continuous actions, see e.g. Definition 7.1 and 7.15 in \cite{KL_book}.
\begin{definition}\label{def: various notions on t.d.s.}
Let $\Gamma\curvearrowright X$ be a continuous action. This action is 
\begin{enumerate}
\item \textit{topologically free} if 
for every $e\neq g\in \Gamma$, $\{x\in X:~gx\neq x\}$ is dense in $X$.
\item \textit{minimal} if $X$ has no non-empty proper $\Gamma$-invariant closed subset.
\item \textit{(topologically) transitive} if for all non-empty open sets $U, V\subset X$, there exists an $s\in \Gamma$ such that $sU\cap V\neq \emptyset$.
\item \textit{(topologically) weakly mixing} if the diagonal action $\Gamma\curvearrowright X\times X$ is (topologically) transitive, i.e. for all non-empty open sets $U_1$, $U_2$, $V_1$, $V_2\subseteq X$ there is an $s\in \Gamma$ such that $sU_1\cap V_1\neq \emptyset$ and $sU_2\cap V_2\neq \emptyset$.
\end{enumerate}
\end{definition}

Let us record the following lemma, i.e. Lemma 2.4 in \cite{PV_adv} for reference.

\begin{lem}\label{lem: Popa-Vaes lemma}
Let a group $\Gamma$ act on a set $I$. Then the following conditions are equivalent. 
\begin{itemize}
\item Every orbit of $\Gamma\curvearrowright I$ is infinite.
\item $\Gamma\curvearrowright I$ is weakly mixing, i.e. for every $A, B\subset I$ finite, there exists some $s\in \Gamma$ satisfying $sA\cap B=\emptyset$.
\end{itemize}
\end{lem}
 
Using this lemma, we obtain the following well-known fact. We include its proof for completeness.

\begin{lem}\label{lem: top.w.m. for generalized full shifts}
Let $X_0$ be a finite set with at least two elements and let $\Gamma\curvearrowright I$ be an action on an infinite set $I$ such that each orbit is infinite. Then the generalized full shift  $\Gamma\curvearrowright X:=X_0^I$ is topologically weakly mixing.
\end{lem}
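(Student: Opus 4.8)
The plan is to verify the definition of topological weak mixing directly, using the fact that basic open sets in the product topology on $X = X_0^I$ are cylinder sets determined by finitely many coordinates, and exploiting that each $\Gamma$-orbit on $I$ is infinite.

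First I would reduce to cylinder sets: since finite intersections of cylinders are cylinders, it suffices to treat $U_1, U_2, V_1, V_2$ of the form $\{x \in X : x|_F = \xi\}$ for a finite $F \subseteq I$ and a prescribed pattern $\xi \in X_0^F$. Concretely, fix a finite set $F \subseteq I$ large enough to support all four cylinders, say $U_j = \{x : x|_F = a_j\}$ and $V_j = \{x : x|_F = b_j\}$ with $a_j, b_j \in X_0^F$ for $j = 1, 2$. I want to find $s \in \Gamma$ with $sU_1 \cap V_1 \neq \emptyset$ and $sU_2 \cap V_2 \neq \emptyset$ simultaneously. Note $sU_j$ is the cylinder determined on $s\cdot F$ by the pattern $a_j \circ s^{-1}$, so $sU_j \cap V_j \neq \emptyset$ precisely when the patterns $a_j$ (transported to $sF$) and $b_j$ (on $F$) are compatible, i.e.\ agree on the overlap $sF \cap F$.

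The key step is therefore to choose $s$ so that $sF \cap F = \emptyset$. If $sF$ is disjoint from $F$, then for each $j$ the constraints defining $sU_j$ and $V_j$ live on disjoint coordinate sets, so $sU_j \cap V_j$ is a nonempty cylinder; and since the two conditions concern the same disjoint pair of coordinate blocks, a single such $s$ works for both $j=1$ and $j=2$ at once (one just specifies $a_1, a_2$ on $sF$... but we only need nonemptiness for each fixed $j$, and these are independent, so any $s$ with $sF \cap F = \emptyset$ suffices). To produce such an $s$: write $F = F_1 \sqcup \cdots \sqcup F_k$ according to which $\Gamma$-orbit each element lies in. For a single orbit $\Gamma \cdot i$, the set $\{s \in \Gamma : s \cdot i = j\}$ for $j$ ranging over the finitely many relevant points is a finite union of cosets of $\mathrm{Stab}(i)$; since the orbit $\Gamma \cdot i$ is infinite, $\mathrm{Stab}(i)$ has infinite index, so these finitely many cosets do not cover $\Gamma$, and we can avoid all of them. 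Intersecting the (cofinite-in-the-relevant-sense) conditions coming from finitely many pairs of points in $F$ in the same orbit, I can find a single $s \in \Gamma$ with $sF \cap F = \emptyset$.

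The main obstacle — really the only subtlety — is the bookkeeping in this last step: I must ensure that the finitely many ``bad'' elements of $\Gamma$ (those $s$ for which some point of $F$ maps into $F$ or vice versa) genuinely form a proper subset of $\Gamma$. This is where the hypothesis that every orbit is infinite is used: an element $s$ with $s\cdot i = i'$ for $i, i'$ in a common orbit lies in a single coset of $\mathrm{Stab}(i)$, and $[\Gamma : \mathrm{Stab}(i)] = |\Gamma \cdot i| = \infty$, so finitely many such cosets (over all pairs $i, i' \in F$ in the same orbit) cannot exhaust $\Gamma$; points of $F$ in different orbits never interfere. Once such an $s$ is fixed, the verification that $sU_j \cap V_j \neq \emptyset$ is immediate from the disjointness of the defining coordinate sets. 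I would remark in passing that the same argument, strengthened to show that the set of bad $s$ is finite, yields topological mixing as well, though only weak mixing is asserted here.
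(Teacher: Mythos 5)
Your proof is correct and follows essentially the same route as the paper: reduce to cylinder sets and produce a single $s\in\Gamma$ carrying the relevant finite coordinate set $F\subset I$ off itself, after which the two intersection conditions become constraints on disjoint coordinate blocks and are automatically satisfiable. The paper delegates the existence of such an $s$ to \cite[Lemma~2.4]{PV_adv}; your direct argument via cosets of stabilizers proves the same thing, but the step ``finitely many cosets of infinite-index subgroups cannot cover $\Gamma$'' is exactly B.~H.~Neumann's covering lemma and should be cited rather than asserted, since the cosets involved belong to possibly different subgroups $\mathrm{Stab}(i)$ and the claim is not elementary. Relatedly, your phrases ``cofinite-in-the-relevant-sense'' and ``finitely many bad elements'' are misleading: the bad set is a finite union of cosets, and when $\mathrm{Stab}(i)$ is infinite these cosets are infinite, so the bad set itself is infinite. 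For the same reason your closing remark about upgrading to topological mixing is false in general: if some $\mathrm{Stab}(i)$ is infinite --- as happens for the paper's main example, the left-right translation $\Gamma_1\times\Gamma_1\curvearrowright\Gamma_1$, where $\mathrm{Stab}(e)$ is the full diagonal --- then with $U=\{x:x_i=a\}$ and $V=\{x:x_i=b\}$ for $a\neq b$ one has $sU\cap V=\emptyset$ for every $s\in\mathrm{Stab}(i)$, so the action is weakly mixing but not mixing.
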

\begin{proof}
Let $U_1$, $U_2$, $V_1$, $V_2\subseteq X$ be 
four non-empty open sets. By shrinking them if necessary, we may assume they are standard cylinder sets that are determined by four finite sets $I_1$, $I_2$, $J_1$ and $J_2$ in $I$ respectively. Then it suffices to show there exists some $s\in \Gamma$ such that $sI_1\cap J_1=\emptyset=sI_2\cap J_2$, or more directly, show $s(I_1\cup I_2)\cap (J_1\cup J_2)=\emptyset$. Since every orbit of $\Gamma\curvearrowright I$ is infinite, such an $s$ does exist by Lemma \ref{lem: Popa-Vaes lemma}. 
\end{proof}

\subsection{Continuous orbit equivalence}\label{subsection: coe}
The following notion  was introduced in \cite[Definition 2.5]{LX}.
\begin{definition}\label{def: coe}
Let $\alpha: G\curvearrowright X$ and $\beta: H\curvearrowright Y$ be two continuous actions. We say $\alpha$ is continuously orbit equivalent to $\beta$ if there is a homeomorphism $\phi:  X\rightarrow Y$ with inverse $\psi$ and continuous maps $c: G\times X\to H$ and $c': H\times Y\to G$ such that
\begin{align*}
\phi(\alpha_g(x))=\beta_{c(g, x)}(\phi(x)),\\
\psi(\beta_h(y))=\alpha_{c'(h, y)}(\psi(y))
\end{align*}
for all $g\in G$, $h\in H$, $x\in X$ and $y\in Y$.
\end{definition}
Here are a few facts on this notion.  If $\alpha$ and $\beta$ are topologically free, which is always the case in this paper, then $c$ and $c'$ are \textit{cocycles}, i.e. $c(g_1g_2, x)=c(g_1, g_2x)c(g_2, x)$ for all $g_1, g_2\in G$ and $x\in X$ (and a similar identity holds for $c'$) by \cite[Lemma 2.8]{LX}. Moreover, $G\ni g\mapsto c(g, x)\in H$ is a bijection for all $x\in X$ by \cite[Lemma 2.10]{LX}.
For later reference, we say $c'$ is the \textit{inverse cocycle} of $c$ and call such a coupling $(c, c', \phi)$ a \textit{continuous orbit equivalence coupling} (\textit{coe coupling} for short).
If cocycles $c$ and $c'$ do not depend on the space coordinates and they are considered as group isomorphisms between the two acting groups, then the above definition boils down to \textit{topological conjugacy}. 

Another equivalent way to characterize continuous orbit equivalence is to use transformation groupoids. For general references on groupoids  and their C$^*$-algebras, see \cites{Ren, SSW}.

For a continuous action $G\curvearrowright X$, the \textit{transformation groupoid} $G\ltimes X$ is given by the set $G\times X$ with multiplication $(g', x')(g, x)=(g'g, x)$ if $x'=gx$, inversion $(g, x)^{-1}=(g^{-1}, gx)$,
range map $r(g, x)=gx$ and source map $s(g, x)=x$. It is well-known that the reduced crossed product $C(X)\rtimes_rG$ is isomorphic to the reduced groupoid C$^*$-algebra $C^*_r(G\ltimes X)$ \cite[Example 9.3.8]{SSW}. Moreover, this isomorphism is induced by sending the canonical unitary $u_g\in C(X)\rtimes_rG$ (respectively, $f\in C(X)$) to the function  $G\ltimes X\ni (s, x)\mapsto \delta_{s, g}\in \mathbb{C}$ (respectively, $G\ltimes X\ni (s, x)\mapsto \delta_{e, s}f(x)\in\mathbb{C}$), where $\delta_{.}$ denotes the Dirac function, see e.g. \cite[Example 9.1.7 and 9.2.6]{SSW}.

The following theorem appeared as part of Theorem 1.2 in \cite{LX}.
\begin{thm}\label{thm: coe by isomorphic topological groupoids}
Let $\alpha:~ G\curvearrowright X$ and $\beta: ~H\curvearrowright Y$ be topologically free systems. The following are equivalent.
\begin{enumerate}
\item[(i)] $G\curvearrowright X$ is continuously orbit equivalent to $H\curvearrowright Y$.
\item[(ii)] $G\ltimes X\cong H\ltimes Y$ (as topological groupoids).
\end{enumerate}
\end{thm}
To show $(i)\Rightarrow (ii)$, Li proved that the map $G\ltimes X\ni (g, x)\mapsto (c(g, x), \phi(x))\in H\ltimes Y$ is an isomorphism, where $\phi: X\rightarrow Y$ denotes the homeomorphism witnessing the continuous orbit equivalence and $c: G\times X\to H$ denotes the associated cocycle. This explicit form will be needed in step 7 in the proof of Theorem \ref{thm: main thm for generalized wreath product actions}.

\subsection{Continuous cocycle superrigidity}\label{subsection: continuous csr/oe superrigidity}

In this paper, we say 
a continuous action $\Gamma\curvearrowright X$ on a compact Hausdorff space $X$ is a \textit{continuous cocycle superrigid} action if for any countable discrete group $G$, every continuous cocycle $c: \Gamma\times X\to G$ is trivial, i.e. there exists a group homomorphism $\phi: \Gamma\to G$ and a continuous map $b: X\to G$ such that $c(g, x)=b(gx)^{-1}\phi(g)b(x)$ holds for all $g\in G$ and $x\in X$. Once this equality holds, we say $c$ is \textit{cohomologous} to $\phi$ and call $b$ a \textit{transfer map}.

\subsection{Compression functions}
The following notion is frequently used when studying finitely generated groups, see e.g. \cite[Definition 2.1]{cj_etds}. 
\begin{definition}[Compression functions]\label{def: compression functions}
Let $G$ be a finitely generated group with a symmetric generating set $T$. Let $s$ be an element in $G$ with infinite order. Denote by $\ell_T$ the word length function on $G$, i.e. $\ell_T(g)=\min\{n:~\exists~t_1,\ldots, t_n\in T,~\text{s.t.}~
g=t_1\cdots t_n\}$. \textit{The compression function} $\rho_s$ is defined as  $\rho_s(x)=\min\{\ell_T(s^n):~n\geq x\}$, where $x\in \mathbb{R}_+$.
\end{definition}
By definition, we know $\rho_s(n)\leq \ell_T(s^n)$ for any positive integer $n$. For any $c\geq 0$, we denote the number $\sup\{\lambda>0:~\rho_s(\lambda)\leq c\}$ by $\rho_s^{-1}(c)$. This makes sense since $\rho_s$ is non-decreasing, goes to infinity and is constant on open intervals $(n, n+1)$ for all $n\in\mathbb{N}$, see \cite[Proposition 2.2]{cj_etds}.

\section{Continuous cocycle superrigidity for generalized full shifts}\label{section: ccs for generalized full shifts}

In this section, we show that certain generalized full shifts are continuous cocycle superrigid actions, which is a key ingredient for proving Theorem \ref{thm: main thm for the whole paper}.

\begin{thm}\label{thm: ccsr for generalized full shifts}
Let $\Gamma$ be a countable discrete group. Let $X_0$ be a finite set with at least two elements. Let $\Gamma\curvearrowright I$ be a transitive action satisfying the following conditions:
\begin{enumerate}
\item\label{condition 1 in ccsr thm} There exists some $s\in \Gamma$ such that $\lim_{n\to\infty}s^ni=\infty$ for all $i\in I$, i.e. for any finite subset $F\subset I$, there exists some $N\geq 1$ such that $s^ni\not\in F$ for all $n\geq N$.
\item\label{condition 2 in ccsr thm} There exists some increasing finite  set $F_n\subset I$ such that $I=\cup_nF_n$ and $E:=(\cup_{n\geq 0}s^{-n}F_n)\cap (\cup_{n\geq 0}s^nF_n)$ is finite, where $s$ is the element in (1).
\item\label{condition 3 in ccsr thm} The generalized Bernoulli shift  $\Gamma\curvearrowright (X_0^I,\mu_0^I)$, where $\mu_0$ is the uniform measure on $X_0$ with $\mu(\{a\})=\frac{1}{|X_0|}$ for every $a\in X_0$ and $\mu_0^I$ denotes the product measure, is a cocycle superrigid action in the following sense: for any countable discrete group $G$ and any measurable cocycle $c: \Gamma\times X_0^I\to G$, there exists some measurable map $b: X_0^I\to G$ and some group homomorphism $\phi: \Gamma\to G$ such that $c(g, x)=b(gx)^{-1}\phi(g)b(x)$ holds for all $g\in \Gamma$ and $\mu_0^I$-a.e. $x\in X_0^I$.
\end{enumerate}
 Then the generalized full shift $\Gamma\curvearrowright X_0^I$ defined using $\Gamma\curvearrowright I$ is a continuous cocycle superrigid action.
\end{thm}

To prove Theorem \ref{thm: ccsr for generalized full shifts}, the main idea is to mimic the proof of \cite[Theorem 8]{wp}, see also the proof of \cite[Theorem 1]{np}, \cite[Theorem 5.1]{pw} or \cite[Theorem 3.1]{SC}. Nevertheless, our proof needs new ingredient in order to deal with generalized full shifts of non-cyclic groups. A crucial ingredient is to replace the two degenerate cones with finite intersection, i.e. two half-lines in opposite directions in the Cayley graph of $\Z$ with $\cup_{n\geq 0}s^{-n}F_n$ and $\cup_{n\geq 0}s^nF_n$ for a well-chosen exhausting sequence $\{F_n\}$ for $I$ in the sense that condition \eqref{condition 2 in ccsr thm} holds.
\begin{remark}\label{remark: relation to our previous work}
If $\Gamma\curvearrowright I$ is the left translation action $\Gamma\curvearrowright \Gamma$, then initial results on the above theorem were proved in a preliminary version of \cite{cj}. But these results did not appear in the published version since they were covered by the main theorem in \cite{cj} after we discovered the one-end condition. The above condition \eqref{condition 2 in ccsr thm} is inspired by \cite[Lemma 3.4]{cj_etds}.
\end{remark}
Before proving Theorem \ref{thm: ccsr for generalized full shifts}, let us present concrete examples to which Theorem \ref{thm: ccsr for generalized full shifts} applies. 
\begin{cor}\label{main corollary}
Let $X_0$ be a finite set with at least two elements. Let $\Gamma\curvearrowright I$ be either one of the following actions:
\begin{itemize}
\item[(i)] $\Gamma_1\times \Gamma_2\curvearrowright \Gamma_1$ by $(\gamma_1, \gamma_2)\cdot g=\gamma_1\cdot g$ for all $\gamma_1, g\in \Gamma_1$ and $\gamma_2\in \Gamma_2$, where $\Gamma_2$ is a finitely generated group with one end, e.g. $\Gamma_2=\mathbb{Z}^n$ for $n\geq 2$.
\item[(ii)] $\Gamma_1\times \Gamma_1\curvearrowright \Gamma_1$ by left-right translation, where $\Gamma_1$ is a finitely generated, non-torsion and non-amenable group.
\item[(iii)] $SL_k(\mathbb{Z})\curvearrowright SL_k(\mathbb{Z})\cdot e_1$ by matrix left multiplication, where $k\geq 3$ and $e_1=(1,0,\ldots, 0)^t$.
\end{itemize}
Then the generalized full shift $\Gamma\curvearrowright X_0^I$ is a continuous cocycle superrigid action.
\end{cor}

\begin{proof}
Case (i) is actually a direct corollary of results in \cite{jiang_pams}. Since we do not use it in this paper, let us briefly sketch its proof. We may identify $I$ with $\Gamma/{\Gamma_2}$ and observe that the generalized full shift $\Gamma\curvearrowright X$ can be regarded as the coinduced action of the trivial action $\Gamma_2\curvearrowright X_0$ from the subgroup $\Gamma_2$ to the ambient group $\Gamma$, see \cite[Section 2]{jiang_pams}. Then we can apply Theorem 1.1 and Proposition 2.5 (8) in \cite{jiang_pams} to finish the proof of this case.

Next, let us check the three conditions in Theorem \ref{thm: ccsr for generalized full shifts} hold true for case (ii) and (iii). 

Consider case (ii) first. Let $g_0$ be an element in $\Gamma_1$ with infinite order. Set $s=(g_0, e)\in \Gamma$. Clearly, condition \eqref{condition 1 in ccsr thm} holds true. Moreover, condition \eqref{condition 3 in ccsr thm} holds by applying Popa's cocycle superrigidity theorem in the measurable setting \cite[Corollary 1.2]{popa_cocycle2} for non-amenable product groups since $\Gamma_1$ is non-amenable.

We are left to verify condition \eqref{condition 2 in ccsr thm} holds true.
 
For any $n\geq 1$, set $B(n):=\{g\in \Gamma:~\ell_T(g)\leq n\}$, where $T$ is any finite symmetric generating set for $\Gamma_1$. Define $F_n=[B(\frac{\rho_{g_0}(n)}{8}+1)\times B(\frac{\rho_{g_0}(n)}{8}+1)]\cdot e\subset I$. Clearly, $F_n\subseteq F_m$ if $n\leq m$ and $I=\cup_nF_n$. 
Let us show that $E=(\cup_{n\geq 0}s^{-n}F_n)\cap (\cup_{n\geq 0}s^nF_n)$ is finite. 

Take any $s^ni=s^{-m}j\in E$, where $n, m\geq 0$, $i\in F_n$ and $j\in F_m$. Then $s^{n+m}i=j$. Write $i=(g_n, h_n)\cdot e$ and $j=(g_m, h_m)\cdot e$, where 
$g_n, h_n\in B(\frac{\rho_{g_0}(n)}{8}+1)$ and $g_m, h_m\in B(\frac{\rho_{g_0}(m)}{8}+1)$.
Then, $g_0^{n+m}=g_mh_m^{-1}h_ng_n^{-1}$.

By Definition \ref{def: compression functions}, we have 
\begin{align*}
\rho_{g_0}(n+m)\leq \ell_T(g_0^{n+m})=\ell_T(g_mh_m^{-1}h_ng_n^{-1})&\leq 2(\frac{\rho_{g_0}(n)}{8}+1)+2(\frac{\rho_{g_0}(m)}{8}+1)\\
&\leq 4+\frac{\rho_{g_0}(n+m)}{2}.
\end{align*}
Therefore, $\rho_{g_0}(n+m)\leq 8$, i.e. $n+m\leq \rho_{g_0}^{-1}(8)$. 

Next, we estimate $\ell_{T'}(s^n (g_n,h_n))$, where $T'$ denotes the following generating set of $\Gamma$: 
\[T'=\{(t, e), (e, t): ~t\in T\}.\] 
Note that $\ell_{T'}(g, h)\leq \ell_T(g)+\ell_T(h)$ for any $(g, h)\in \Gamma$.

Let $g_n, h_n\in B(\frac{\rho_{g_0}(n)}{8}+1)$ be as above,
\begin{align*}
\ell_{T'}(s^n(g_n, h_n))=\ell_{T'}(g_0^ng_n, h_n)
&\leq \ell_T(g_0^ng_n)+\ell_T(h_n)\\
&\leq \ell_T(g_0)n+\ell_T(g_n)+\ell_T(h_n)\\
&\leq \ell_T(g_0)\rho_{g_0}^{-1}(8)+2+\frac{\rho_{g_0}(n+m)}{4}\\
&\leq \ell_T(g_0)\rho_{g_0}^{-1}(8)+4.
\end{align*}
Therefore, $s^ni=s^n(g_n, h_n)\cdot e\in B_{T'}(\ell_T(g_0)\rho_{g_0}^{-1}(8)+4)\cdot e$. Thus, 
\begin{align*}
E\subseteq B_{T'}(\ell_T(g_0)\rho_{g_0}^{-1}(8)+4)\cdot e.
\end{align*} 
This shows that $E$ is a finite set. Thus, the proof of case (ii) is finished.

Now, consider case (iii). Condition \eqref{condition 3 in ccsr thm} holds due to Popa's cocycle superrigidity theorem in the measurable setting \cite[Corollary 1.2]{popa_cocycle1} for infinite property (T) groups, e.g. $SL_k(\mathbb{Z})$ when $k\geq 3$. 
We are left to verify the first two conditions hold true.

Let $s$ be any element in $SL_k(\mathbb{Z})$ with $k$ many distinct eigenvalues $\{\lambda_i\}_{i=1}^k$ of non unit absolute value, say $|\lambda_j|>1$ iff $1\leq j\leq \ell$ for some $1\leq \ell\leq k-1$. The existence of such an $s$ will be proved in Proposition \ref{prop:existence of special matrix s}. Clearly, $s$ has infinite order.

Assume condition \eqref{condition 1 in ccsr thm} fails for the above $s$, then there exists some $i:=ge_1$, where $g\in SL_k(\mathbb{Z})$ and a sequence $n_i\to+\infty$ such that $s^{n_i}ge_1=ge_1$, i.e. $g^{-1}s^{n_i}ge_1=e_1$. This means that the first column of $g^{-1}s^{n_i}g$ is $e_1$, hence $g^{-1}s^{n_i}g$ must have eigenvalue 1, and thus $s$ has some eigenvalue with absolute value one, a contradiction.

Now, let us check condition \eqref{condition 2 in ccsr thm} holds true.

For any $n\geq 0$, define $F_n=\{v=(v_1,\ldots, v_k)^t\in I: ~\sup_{1\leq i\leq k}|v_k|\leq n\}\subset I$. Clearly, $F_n$ is finite, $I=\cup_nF_n$ and $F_n\subseteq F_m$ if $n\leq m$.

Take $g=(g_{ij})\in SL_k(\mathbb{Q})$ such that $gsg^{-1}=\mbox{Diag}(\lambda_1,\ldots, \lambda_k)$, the diagonal matrix with eigenvalues of $s$ as diagonal entries. 

It suffices to show that there exists some positive integer $N\geq 1$ such that if $s^nf=s^{-m}f'\in E$, where $f\in F_n$ and $f'\in F_m$, then $m+n\leq N$. 

First, let $c=\sup_{1\leq i, j\leq k}|g_{ij}|$. We may assume $g\in M_k(\frac{\Z}{L})$ for some positive integer $L$. For any $1\leq i\leq k$, let $g_i$ be the $i$-th row of $g$. Then observe that
\begin{align}\label{eq: vanishing matrix multiplication}
\text{For any column vector}~ v\in \mathbb{Z}^n,~ \text{if}~ |g_i\cdot v|\leq \frac{1}{2L}, ~\text{then} ~g_i\cdot v=0.
\end{align}
 Here, $g_i\cdot v$ is the usual matrix multiplication between the row vector $g_i$ and the column vector $v$.

From $s^nf=s^{-m}f'$, we know $s^{n+m}f=f'$, i.e. $\mbox{Diag}(\lambda_1^{m+n},\ldots, \lambda_k^{m+n})gf=gf'$.
Write $(x_1,\ldots, x_k)^t=gf\in g\cdot F_n$ and $(y_1,\ldots, y_k)^t=gf'\in g\cdot F_m$. So 
\begin{align}\label{eq: x_i and y_i are propotional}
\lambda_i^{m+n}x_i=y_i~ \text{for all $1\leq i\leq k$}.
\end{align}

Since $f\in F_n$ and $f'\in F_m$, the definition of $c$ and \eqref{eq: x_i and y_i are propotional} imply that $\sup_{1\leq i\leq k}|x_i|\leq ckn$ and $\sup_{1\leq i\leq k}|y_i|\leq ckm$.

If $1\leq j\leq \ell$, then $|\lambda_j|>1$. Note that $|x_j|=\frac{|y_j|}{|\lambda_j^{m+n}|}\leq \frac{ckm}{|\lambda_j|^{m+n}}\leq \frac{ck(m+n)}{|\lambda_j|^{m+n}}<\frac{1}{2L}$ if $m+n$ is large enough, say $m+n>M_j$ for some $M_j$.

Similarly, if $\ell<j\leq k$, then $|\lambda_j|<1$. We have
$|y_j|=|\lambda_j^{m+n}x_j|\leq |\lambda_j|^{m+n}ckn\leq |\lambda_j|^{m+n}ck(m+n)<\frac{1}{2L}$ if $m+n$ is large enough, say $m+n>N_j$.

Set $N=\sum_{1\leq j\leq \ell}M_j+\sum_{\ell<j\leq k}N_j$. Let us check this $N$ is what we want.

If $m+n>N$, then  $|x_j|<\frac{1}{2L}$ for all $1\leq j\leq \ell$ and $|y_j|<\frac{1}{2L}$ for all $\ell<j\leq k$. Since $x_j=g_j\cdot f$ and $y_j=g_j\cdot f'$, we deduce that $x_j=0$ for all $1\leq j\leq \ell$ and $y_j=0$ for all $\ell<j\leq k$ by \eqref{eq: vanishing matrix multiplication}. Thus we have $x_j=0=y_j$ for all $1\leq j\leq k$ by \eqref{eq: x_i and y_i are propotional}; equivalently, $f=0=f'$, which is a contradiction to the fact that $f, f'\in I=SL_k(\Z)\cdot e_1$.
\end{proof}
Now, let us show the existence of an $s$ used in the above proof.
\begin{proposition}\label{prop:existence of special matrix s}
Let $k\geq 2$. Then $SL_k(\mathbb{Z})$ contains an element $s$ with $k$-many distinct eigenvalues all having non unit absolute values.
\end{proposition}
\begin{proof}
For any $n\geq 1$, set $s_n=\begin{pmatrix}
1&n\\
1&n+1
\end{pmatrix}\in SL_2(\Z)$ and $t_n=\begin{pmatrix}
1&1&0\\
0&0&1\\
n+1&n&0
\end{pmatrix}\in SL_3(\Z)$.

A calculation shows that the characteristic polynomial of $s_n$ is $q_n(\lambda)=\lambda^2-(n+2)\lambda+1$ and its two eigenvalues are $\frac{n+2\pm \sqrt{n^2+4n}}{2}$. Clearly both eigenvalues have non unit absolute values. On the other hand, the characteristic polynomial of $t_n$ is $p_n(\lambda)=\lambda^3-\lambda^2-n\lambda-1$. Note that $\lim\limits_{\lambda\to-\infty}p_n(\lambda)=-\infty$, $\lim\limits_{\lambda\to+\infty}p_n(\lambda)=+\infty$, $p_n(-1)=n-3$, $p_n(0)=-1$ and $p_n(1)=-n-1$. If $n>3$, then we may apply the intermediate value theorem to $p_n(\lambda)$ and deduce that $t_n$ has three real eigenvalues, say $\{\lambda_i\}_{i=1}^3$, such that $\lambda_1<-1<\lambda_2<0<1<\lambda_3$. 

Next, we make the following claims.

Claim 1: $s_n$ and $s_m$ do not have common eigenvalues if $m>n\geq 1$.

Claim 2: $s_m$ and  $t_n$ do not have common eigenvalues if $m>n\geq 1$. 

To see claim 1 holds, just observe that if $n\neq m$, then $q_n(\lambda)=0=q_m(\lambda)$ implies $\lambda=0$, which is clearly not an eigenvalue of $s_n$.

To prove claim 2, let us assume $q_m(\lambda)=0=p_n(\lambda)$ holds for some $\lambda$ and some $m>n$. Then a calculation shows $\lambda=\frac{m+2}{(m+2)(m+1)-(n+1)}$. By plugging this value of $\lambda$ into $q_m(\lambda)=0$, we get the identity $(n+1)^2=m(m+2)(n+1)+m(m+2)^2$. Clearly, this is absurd since $m>n\geq 1$. This finishes the proof of claim 2.

Using these two claims, we can easily construct an $s$ with the required property.

If $k$ is even, then we take $\frac{k}{2}$-many distinct positive integers $n_i>3$. Thus, $s_{n_i}$'s have no common eigenvalues by claim 1. Define $s=\mbox{Diag}(s_{n_1},\ldots, s_{n_{\frac{k}{2}}})$, the block diagonal matrix with $s_{n_i}$'s on the diagonal. Clearly, this $s$ has $k$-many distinct eigenvalues.

If $k$ is odd, then we just pick $\frac{k-1}{2}$-many distinct positive integers $n_i>3$ with $n_{\frac{k-1}{2}}$ being the smallest one among them. Then these $\frac{k-1}{2}$-many matrices $s_{n_i}$, where $1\leq i\leq \frac{k-3}{2}$ and $t_{n_{\frac{k-1}{2}}}$ have exactly $k$-many distinct eigenvalues in total by claim 1 and claim 2. Hence we may define $s=\mbox{Diag}(s_{n_1},\ldots, s_{n_{\frac{k-3}{2}}}, t_{n_{\frac{k-1}{2}}})$.
\end{proof}

Let $\Gamma\curvearrowright I$ be a transitive action on a countably infinite set $I$ and $\Gamma\curvearrowright X:=X_0^I$ be the generalized full shift, where $X_0$ denotes a finite set with $|X_0|\geq 2$. Let $d_0$ be a compatible metric on $X_0$ with upper bound 1. Then we may define a compatible metric $d$ on $X$ using $d_0$. For example, we may identify $I$ with $\mathbb{N}$ and define $d(x, y)=\sum_{i\in\mathbb{N}}\frac{1}{2^i}d_0(x_i,y_i)$.

Recall that a main tool to study continuous cocycle superrigidity for full shifts in \cite{cj} is the following notion of homoclinic equivalence relations:
\[\Delta_X:=\{(x, y)\in X\times X:~\lim\limits_{g\to\infty}d(gx, gy)=0\}.\]
To study the generalized full shifts, we need to introduce finer equivalence relations.

Fix any increasing finite subsets $F_n\subset I$ such that $I=\cup_nF_n$. If $s\in \Gamma$ has infinite order, then we may define the following equivalence relations:
\begin{align*}
\Delta_{X, s}^+&:=\{(x, y)\in X\times X:~\lim\limits_{n\to +\infty}\sup_{i\in F}d_0(x_{s^{-n}i}, y_{s^{-n}i})=0~\text{for any finite set $F\subset I$}\},\\
\Delta_{X, s}^-&:=\{(x, y)\in X\times X:~\lim\limits_{n\to +\infty}\sup_{i\in F}d_0(x_{s^ni}, y_{s^ni})=0~\text{for any finite set $F\subset I$}\},\\
\Theta_{X, s}^+&:=\{(x, y)\in X\times X:~\lim\limits_{n\to +\infty}\sup_{i\in F_n}d_0(x_{s^{-n}i}, y_{s^{-n}i})=0\},\\
\Theta_{X, s}^-&:=\{(x, y)\in X\times X:~\lim\limits_{n\to +\infty}\sup_{i\in F_n}d_0(x_{s^ni}, y_{s^ni})=0\}.
\end{align*}
Note that $\Delta_{X, s}^-=\Delta_{X, s^{-1}}^+$, $\Theta_{X, s}^-=\Theta_{X, s^{-1}}^+$, $\Delta_{X, s}^+\supset\Theta_{X, s}^+$ and $\Delta_{X, s}^-\supset\Theta_{X, s}^-$ by our assumption on $F_n$.

We are ready to prove Theorem \ref{thm: ccsr for generalized full shifts}.
\begin{proof}[Proof of Theorem \ref{thm: ccsr for generalized full shifts}]

Let $c: \Gamma\times X_0^I\to G$ be a continuous cocycle into a countable discrete group $G$. We aim to show $c$ is cohomologous to a group homomorphism.

Fix $s\in \Gamma$ and $F_n\subset I$ satisfying condition \eqref{condition 1 in ccsr thm} and \eqref{condition 2 in ccsr thm} respectively.
First, observe that since $c$ is continuous and $G$ is discrete, we may find a finite $F\subset I$ such that the value of $c(s, -)$ depends only on the $F$-coordinates of the second entry, i.e. if $x, y\in X$ with $x_f=y_f$ for all $f\in F$, then $c(s, x)=c(s, y)$. By condition \eqref{condition 2 in ccsr thm}, we may further pick some integer $M\geq 1$ such that $F\subset F_n$ for all $n>M$. 

We split the proof into proving several claims.

\textbf{Claim 1}: Let $(x, y)\in \Delta_{X, s}^+$ (respectively $\Delta_{X, s}^-$). Then $\lim_{n\to +\infty}c(s^n, x)^{-1}c(s^n, y)$ (respectively, $\lim_{n\to-\infty}c(s^n, x)^{-1}c(s^n, y)$) is well-defined.

\begin{proof}

Let us show $\lim_{n\to\infty}c(s^n, x)^{-1}c(s^n, y)$ is well-defined below. The second limit can be treated exactly the same.

It suffices to show that there exists $N\geq 1$ such that 
\[c(s^n, x)^{-1}c(s^n, y)=c(s^m, x)^{-1}c(s^m, y),~\forall~m>n\geq N.\]

Equivalently, we need to show 
\[c(s^m, x)c(s^n, x)^{-1}=c(s^m, y)c(s^n, y)^{-1}, \]
i.e. 
\[c(s^{m-n}, s^nx)=c(s^{m-n}, s^ny),~\forall~m>n\geq  N.\]
Notice that the cocycle relation implies
\begin{align*}
c(s^{m-n}, s^nx)=\prod_{i=m-n-1}^0c(s, s^{i+n}x).
\end{align*}
It suffices to show that for each $0\leq i\leq m-n-1$, we have 
\[c(s, s^{i+n}x)=c(s, s^{i+n}y).\]
By the definition of $F$, we just need to make sure that for each $0\leq i\leq m-n-1$, $(s^{i+n}x)_j=(s^{i+n}y)_j$ for all $j\in F$, i.e. $x_{s^{-n-i}j}=y_{s^{-n-i}j}$ for all $j\in F$.
Since $(x, y)\in \Delta_{X, s}^+$ and $X_0$ is finite, the above clearly holds true for some large $N$ and all $m>n\geq N$.
\end{proof}


Following the notation in \cite[Subsection 2.3]{cj}, we write  
\begin{align*}
c_f^{(s), +}(x, y)&=\lim_{n\to +\infty}c(s^n, x)^{-1}c(s^n, y),~\forall~(x, y)\in \Delta_{X, s}^+,\\
c_f^{(s), -}(x, y)&=\lim_{n\to-\infty}c(s^n, x)^{-1}c(s^n, y), ~\forall~(x, y)\in\Delta_{X, s}^-.
\end{align*}

Set $\mu=\mu_0^I$. Treating the continuous cocycle $c$ as a $\mu$-measurable one and applying condition \eqref{condition 3 in ccsr thm}, we may find some measurable map $b: X\to G$ and a group homomorphism $\phi: \Gamma\to G$ such that 
\[c(g, x)=b(gx)^{-1}\phi(g)b(x),~\forall~\mu\mbox{-a.e.}~x\in X,~\forall~g\in \Gamma. \]

Our goal is to show that $b$ equals a continuous map $b': X\to G$ for $\mu$-a.e. $x\in X$. 

\textbf{Claim 2}: There exists some conull Borel set $X'\subset X$ such that
$c_f^{(s), +}(x, y)=b(x)^{-1}b(y)$ if $(x, y)\in \Delta_{X, s}^+\cap (X'\times X')$; similarly, $b(x)^{-1}b(y)=c_f^{(s), -}(x, y)$ if $(x, y)\in \Delta_{X, s}^-\cap (X'\times X').$

\begin{proof}

Let $D=\{x\in X: ~c(g, x)=b(gx)^{-1}\phi(g)b(x)~\text{for all}~g\in\Gamma\}$. Note that $D$ is $\Gamma$-invariant by cocycle relation and $\mu(D)=1$ since $\Gamma$ is countable. By Lusin's theorem \cite[Theorem 2.24]{Rudin}, there is a compact set $C\subset D$ with $\mu(C)>\frac{1}{2}$ such that the restriction $b|_C$ is continuous.
Define the following set \[M_C:=\{x\in D:~\lim\limits_{n\to+\infty}\frac{1}{n}\sum_{i=1}^n\chi_C(s^ix)= \mu(C)~\mbox{and}~\lim\limits_{n\to +\infty}\frac{1}{n}\sum_{i=1}^n\chi_C(s^{-i}x)= \mu(C)\}.\]
Here, $\chi_C$ denotes the indicator function on $C$.
By condition \eqref{condition 1 in ccsr thm}, the subaction $\mathbb{Z}=\langle s\rangle\curvearrowright (X,\mu)$ is mixing and hence ergodic. Indeed, this can be checked easily by noticing that for any non-empty finite set $A, B\subset I$, there is some $N\geq 1$ such that $s^nA\cap B=\emptyset$ for all $n$ with $|n|\geq N$. Then by applying
Birkhoff's pointwise ergodic theorem \cite[Theorem 4.28]{KL_book} to this subaction, we deduce  that
$\mu(M_C)=1$. 

Define $X'=M_C$. For any $(x, y)\in D\times D$, we have
\begin{align}\label{eq: kill middle b(..) by taking limits}
c(s^m, x)^{-1}c(s^m, y)=b(x)^{-1}\phi(s)^{-m}b(s^mx)b(s^my)^{-1}\phi(s)^mb(y),~\forall~m\in\mathbb{Z}.
\end{align}
Let $(x, y)\in \Delta_{X, s}^+\cap (X'\times X')$. 
Since $\mu(C)>\frac{1}{2}$, the definition of $X'$ shows there is a sequence $n_i\to +\infty$ with $s^{n_i}x, s^{n_i}y\in C$. To see this, let $A_n=\{1\leq i\leq n:~x\in s^{-i}C\}$ and $B_n=\{1\leq i\leq n:~y\in s^{-i}C\}$. Since $x, y\in X'$, we know $\lim\limits_{n\to+\infty}\frac{|A_n|}{n}=\mu(C)=\lim\limits_{n\to+\infty}\frac{|B_n|}{n}$. Hence,
\begin{align*}
\liminf_{n\to+\infty}\frac{|A_n\cap B_n|}{n}&=\liminf_{n\to+\infty}\frac{|A_n|+|B_n|-|A_n\cup B_n|}{n}\\
&\geq \liminf_{n\to+\infty}\frac{|A_n|+|B_n|-n}{n}=2\mu(C)-1>0.
\end{align*}
Now, it is clear that the desired sequence $n_i$ exists.

Next, observe that $(x, y)\in\Delta_{X, s}^+$ implies $d(s^{n_i}x, s^{n_i}y)\to 0$ as $n_i\to +\infty$. Set $m=n_i$ and take limit on both sides of \eqref{eq: kill middle b(..) by taking limits}, we deduce that $c_f^{(s), +}(x, y)=b(x)^{-1}b(y)$ from claim 1 and the fact that $b|_C$ is continuous. The second half of claim 2 can be proved similarly.
\end{proof}

Before making the next claim, recall that at the beginning of the proof, we have found a finite subset $F\subset I$ and some positive integer $M$ such that $c(s, -)$ only depends on the $F$-coordinates of the second entry and $F\subset F_i$ for all $i>M$. Note that this implies $s^{-i}F\subset s^{-i}F_i$ for all $i>M$.

\textbf{Claim 3:} 
Let $(x, y)\in \Delta_{X, s}^+\cap (X'\times X')$. 
Assume that either $x_k=y_k$ for all $k\in \cup_{i\geq 0}s^{-i}F$ or
$x$ is sufficiently close to $y$, then $b(x)=b(y)$. Similarly, let $(x, y)\in \Delta_{X, s}^-\cap (X'\times X')$. Assume that either $x_k=y_k$ for all $k\in \cup_{i\geq 0}s^iF$ or $x$ is sufficiently close to $y$, then $b(x)=b(y)$.

\begin{proof}
Let $(x, y)\in \Delta_{X, s}^+\cap (X'\times X')$. By claim 2, it suffices to check that $c_f^{(s), +}(x, y)=e$ under either of the two assumptions.

Recall that $c_f^{(s), +}(x, y)=\lim\limits_{n\to+\infty}c(s^n, x)^{-1}c(s^n, y)$ and $c(s^n, x)=\prod_{i=n-1}^0c(s, s^ix)$ for all $n\geq 1$ by cocycle relation. It suffices to show $c(s, s^ix)=c(s, s^iy)$ for all $i\geq 0$. 
Since $c(s, -)$ only depends on the $F$-coordinates of the second entry, one just needs to make sure $x_{s^{-i}f}=y_{s^{-i}f}$ for all $i\geq 0$ and $f\in F$, i.e. $x$ and $y$ take the same value on all coordinates in $\cup_{i\geq 0}s^{-i}F$, which is exactly the first assumption.

Next, from $(x, y)\in\Delta_{X, s}^+$, we deduce that $\lim\limits_{i\to\infty}\sup_{f\in F}d_0(x_{s^{-i}f}, y_{s^{-i}f})=0$. This implies that there exists some $N_1>0$ such that for all $i>N_1$, we have $x_{s^{-i}f}=y_{s^{-i}f}$ for all $f\in F$, i.e. $x$ and $y$ take the same value on all coordinates in $\cup_{i>N_1}s^{-i}F$.

Therefore, if $x$ is sufficiently close to $y$ such that $x_{s^{-i}f}=y_{s^{-i}f}$ for all $f\in F$ and $0\leq i\leq N_1$, then we also have $c_f^{(s), +}(x, y)=e$; equivalently, $b(x)=b(y)$. 

The case $(x, y)\in \Delta_{X, s}^-\cap (X'\times X')$ can be handled similarly.
\end{proof}

Recall that $F_n\subset I$ satisfies condition \eqref{condition 2 in ccsr thm} in the statement of the theorem, $M$ and $F$ are chosen to be finite. Hence 
\[E':=[(\cup_{n\geq 0}s^{-n}F_n)\cap (\cup_{n\geq 0}s^nF_n)]\cup(\cup_{n=-M}^Ms^nF)~\text{is also finite}.\] 
Let $v\in X_0^{E'}$ and $[v]:=\{x\in X~:x_f=v_f,~\forall~f\in E'\}$. Define $p_v: [v]\times [v]\to [v]$ as follows:
\begin{align}\label{eq: def of p_v(x, y)}
(p_v(x, y))_i=\begin{cases}
y_i,~&~\mbox{if}~i\in (\cup_{n\geq 0}s^{-n}F_n)\cup(\cup_{n=-M}^Ms^nF),\\
x_i,~&~\mbox{elsewhere}.
\end{cases}
\end{align}

\textbf{Claim 4:} For each $v\in X_0^{E'}$, $(p_v)_*(\mu\times \mu)$ is absolutely continuous with respect to $\mu|_{[v]}$, i.e. $(p_v)_*(\mu\times \mu)\ll \mu|_{[v]}$. 
\begin{proof}
We show below that the two measures are in fact equivalent. Note that the finiteness of $E'$ rather than its explicit definition is needed for the proof. 

To prove claim 4, we argue similarly as in the proof of Lemma 1 in \cite[Section 5]{wp}. It suffices to find a constant $\tilde{D}>0$ such that for each cylinder set $A_{K, v'}=\{x\in X:~x_k=v'_k,~\forall~k\in K\}$, we have $(p_v)_*(\mu\times \mu)(A_{K, v'})\leq \tilde{D}\cdot \mu|_{[v]}(A_{K, v'})$. Here, $K\subset I$ is any non-empty finite set and $v'\in X_0^{K}$. 
For ease of notations, set 
\[I_+=\cup_{n\geq 0}s^nF_n,~ I_-=\cup_{n\geq 0}s^{-n}F_n,~\text{and}~I_0=\cup_{n=-M}^Ms^nF.\]
On the one hand, we have
\begin{align}\label{eq: description of preimage of cylinder sets}
\begin{split}
&\quad p_v^{-1}(A_{K, v'})\\
&=\{(x, y)\in [v]\times [v]:~p_v(x, y)\in A_{K, v'}\}\\
&=\{(x, y)\in [v]\times [v]:~(p_v(x, y))_i=v'_i,~\forall~i\in K\}\\
&=\left\lbrace {(x, y)\in [v]\times [v]:~y_i=v'_i,~\forall~i\in K\cap (I_-\cup I_0)};~{x_i=v'_i,~\forall~i\in K\setminus (I_-\cup I_0)}\right\rbrace\\
&=\left\lbrace {(x, y)\in X\times X:y_i=v'_i,~\forall~i\in K\cap (I_-\cup I_0)},~ y_i=v_i,~\forall~i\in E';~\right.\\
&\quad\left. {~x_i=v'_i,~\forall~i\in  K\setminus (I_-\cup I_0)},~ x_i=v_i,~\forall~i\in E'\right\rbrace\\
&=\left\lbrace x\in X:~x_i=v'_i,~\forall~i\in  K\setminus (I_-\cup I_0),~ x_i=v_i,~\forall~i\in E'\right\rbrace\times\\
&\quad \left\lbrace y\in X:~y_i=v'_i,~\forall~i\in K\cap (I_-\cup I_0),~ y_i=v_i,~\forall~i\in E'\right\rbrace.
\end{split}
\end{align}
On the other hand, 
\begin{align*}
\mu|_{[v]}(A_{K, v'})=\mu(A_{K, v'}\cap [v])=\begin{cases}
\frac{1}{|X_0|^{|E'\cup K|}},~&\text{if $v'_i=v_i$ for all $i\in E'\cap K$},\\
0,~&\text{otherwise}.
\end{cases}
\end{align*}
Here, we make the convention that if $E'\cap K=\emptyset$, then $\mu(A_{K, v'}\cap [v])=\frac{1}{|X_0|^{|E'\cup K|}}$.
Note that since $E'\cap K=[E'\cap (K\setminus (I_-\cup I_0))]\sqcup [E'\cap (K\cap (I_-\cup I_0))]$, it is clear that if $v'_i\neq v_i$ for some $i\in E'\cap K$, then $\mu|_{[v]}(A_{K, v'})=0=(p_v)_*(\mu\times \mu)(A_{K, v'})$ by \eqref{eq: description of preimage of cylinder sets}. Hence without loss of generality, we may assume $v'_i=v_i$ for all $i\in E'\cap K$ from now on. This means that $\mu|_{[v]}(A_{K, v'})=\frac{1}{|X_0|^{|E'\cup K|}}$.

Therefore, from \eqref{eq: description of preimage of cylinder sets}, we deduce that
\begin{align*}
&\quad(p_v)_*(\mu\times \mu)(A_{K, v'})\\
&=(\mu\times \mu)(p_v^{-1}(A_{K, v'}))\\
&=\mu(\{x\in X:~x_i=v'_i,~\forall~i\in E'\cup [K\setminus (I_-\cup I_0)]\})\cdot\\
&\quad\mu(\{y\in X:~y_i=v'_i,~\forall~i\in E'\cup(K\cap (I_-\cup I_0))\})\\
&=\frac{1}{|X_0|^{|E'\cup [K\setminus (I_-\cup I_0)]|+|E'\cup(K\cap (I_-\cup I_0))|}}\\
&=\frac{1}{|X_0|^{|E'\cup [K\setminus (I_-\cup I_0)]|+|(K\cap (I_-\cup I_0))\setminus E'|+|E'|}}\\
&=\frac{1}{|X_0|^{|K\cup E'|+|E'|}}~(\text{since $[E'\cup (K\setminus (I_-\cup I_0))]\sqcup [(K\cap (I_-\cup I_0))\setminus E']=K\cup E'$})\\
&=\frac{1}{|X_0|^{|E'|}}\cdot \frac{1}{|X_0|^{|K\cup E'|}}
=\frac{1}{|X_0|^{|E'|}}\cdot \mu|_{[v]}(A_{K, v'}).
\end{align*}
Thus, by setting $\tilde{D}=\frac{1}{|X_0|^{|E'|}}$, we have shown that $(p_v)_*(\mu\times \mu)=\tilde{D}\cdot \mu|_{[v]}$. In particular, $(p_v)_*(\mu\times \mu)\ll \mu|_{[v]}$.
\end{proof}

Since $\mu(X')=1$, we know $X'\cap [v]$ is conull in $[v]$, i.e. $\mu|_{[v]}([v]\setminus (X'\cap [v]))=0$. Hence by the above absolute continuity, we deduce that $(p_v)_*(\mu\times \mu)([v]\setminus (X'\cap [v]))=0$, i.e. $0=(\mu\times \mu)(p_v^{-1}([v]\setminus (X'\cap [v])))=(\mu\times \mu)(([v]\times [v])\setminus p_v^{-1}(X'\cap [v]))$. Thus,
$p_v^{-1}(X'\cap [v])$ has the same $\mu\times \mu$ measure as $[v]\times [v]$. 
For each $y\in [v]$, define $B_y^v=\{x\in [v]:~(x, y)\in p_v^{-1}(X'\cap [v])\}$. Then $\mu(B^v_y)=\mu([v])$ for $\mu$-a.e. $y\in [v]$ by Fubini's theorem \cite[Theorem 8.8]{Rudin}.

Let $X''=X'\cap (\cup_{v\in X_0^{E'}}\{y\in [v]:~\mu(B^v_y)=\mu([v])\})$. Then $\mu(X'')=1$ since $\mu(X')=1$ and $\sqcup_{v\in X_0^{E'}}[v]=X$.

\textbf{Claim 5}: Let $(x, y)\in X''\times X''$. If $x_f=y_f$ for all $f\in E'$, then $b(x)=b(y)$.

\begin{proof}
Let $(x, y)\in X''\times X''$  with $x_f=y_f$ for all $f\in E'$. We denote by $v=x|_{E'}\in X_0^{E'}$ the restriction of $x$ to the $E'$-variables. By the definition of $X''$, we have $x, y\in [v]$ and $\mu(B_x^v)=\mu(B_y^v)=\mu([v])$. Hence $\mu([v]\cap B_x^v)=\mu([v]\cap B_y^v)=\mu([v])$. Therefore, $\mu([v]\cap B_x^v\cap B_y^v)=\mu([v])>0$.
Pick any $z\in [v]\cap B_x^v\cap B_y^v$. Then
$p_v(z, x), p_v(z, y)\in [v]\cap X'$.

Next, let us check that $(x, p_v(z, x)),(p_v(z, y), y)\in \Theta_{X, s}^+$ and $(p_v(z, x), p_v(z, y))\in \Theta_{X, s}^-$, where $\Theta_{X, s}^{\pm}$ are both defined using $F_n$ satisfying condition \eqref{condition 2 in ccsr thm}.

By the definition of $\Theta_{X, s}^{\pm}$, we need to check the following hold.
\begin{align}
\lim\limits_{n\to+\infty}\sup\limits_{i\in F_n}d_0(x_{s^{-n}i}, (p_v(z, x))_{s^{-n}i})=0,\label{eq: identity 1 in claim 4 in ccsr thm}\\
\lim\limits_{n\to+\infty}\sup\limits_{i\in F_n}d_0((p_v(z, x))_{s^ni}, (p_v(z, y))_{s^ni})=0,\label{eq: identity 2 in claim 4 in ccsr thm}\\
\lim\limits_{n\to+\infty}\sup\limits_{i\in F_n}d_0((p_v(z, y))_{s^{-n}i}, y_{s^{-n}i})=0.\label{eq: identity 3 in claim 4 in ccsr thm}
\end{align}
By \eqref{eq: def of p_v(x, y)}, we know $(p_v(z, x))_{s^{-n}i}=x_{s^{-n}i}$ for all $i\in F_n$, hence \eqref{eq: identity 1 in claim 4 in ccsr thm} holds. Similarly, we may also verify \eqref{eq: identity 3 in claim 4 in ccsr thm}.

To check \eqref{eq: identity 2 in claim 4 in ccsr thm} holds, fix any $n\geq 0$ and $i\in F_n$. If $s^ni\in (\cup_{n\geq 0}s^{-n}F_n)\cup (\cup_{n=-M}^Ms^nF)$, then 
$(p_v(z, x))_{s^ni}\overset{\eqref{eq: def of p_v(x, y)}}{=}x_{s^ni}=y_{s^ni}\overset{\eqref{eq: def of p_v(x, y)}}{=}(p_v(z, y))_{s^ni}$. Indeed, the second equality holds since we have $s^ni \in (\cup_{n\geq 0}s^{-n}F_n)\cup (\cup_{n=-M}^Ms^nF)\subset E'$ and we have assumed that $x|_{E'}=y|_{E'}$. If $s^ni\not\in (\cup_{n\geq 0}s^{-n}F_n)\cup (\cup_{n=-M}^Ms^nF)$, then \eqref{eq: def of p_v(x, y)} tells us that
$(p_v(z, x))_{s^ni}=z_{s^ni}=(p_v(z, y))_{s^ni}$. Hence in both cases, we always have $p_v(z, x)_{s^ni}=(p_v(z, y))_{s^ni}$ for all $n\geq 0$ and $i\in F_n$, therefore \eqref{eq: identity 2 in claim 4 in ccsr thm} also holds.

Clearly, we may write
\begin{align*}
b(x)^{-1}b(y)=[b(x)^{-1}b(p_v(z, x))]\cdot [b(p_v(z, x))^{-1}b(p_v(z, y))]\cdot [b(p_v(z, y))^{-1}b(y)].
\end{align*}
To show $b(x)=b(y)$, it suffices to show that
\begin{align}\label{eq: triple identity: b()=b()=b()}
b(x)=b(p_v(z, x))~, ~b(p_v(z, x))=b(p_v(z, y))~\text{and}~ b(p_v(z, y))=b(y). 
\end{align}

Since $\Theta_{X, s}^+\subseteq \Delta_{X, s}^+$ (respectively, $\Theta_{X, s}^-\subseteq \Delta_{X, s}^-$) and $X''\subseteq X'$, we may apply claim 3 to the three pairs of points: $(x, p_v(z, x))$, $(p_v(z, x), y)$ and $(p_v(z, x), p_v(z, y))$. This means that to show \eqref{eq: triple identity: b()=b()=b()}, it suffices to check the following hold.
\begin{align}
x_j&=(p_v(z,x))_j,~\forall~j\in \cup_{i\geq 0}s^{-i}F,\label{eq: identity 4 in claim 4 in ccsr}\\
(p_v(z, x))_j&=(p_v(z, y))_j,~\forall~j\in \cup_{i\geq 0}s^iF,\label{eq: identity 5 in claim 4 in ccsr}\\
(p_v(z, y))_j&=y_j,~\forall~j\in \cup_{i\geq 0}s^{-i}F.\label{eq: identity 6 in claim 4 in ccsr}
\end{align}
Recall that $F\subset F_i$ for all $i>M$ and hence $s^{-i}F\subseteq s^{-i}F_i$ and $s^iF\subseteq s^iF_i$ for all $i>M$.
Moreover, when checking \eqref{eq: identity 1 in claim 4 in ccsr thm} holds, we have shown $x_j=(p_v(z, x))_j$ for all $j\in \cup_{i\geq 0}s^{-i}F_i$. Hence, it suffices to check \eqref{eq: identity 4 in claim 4 in ccsr} for every $j\in \cup_{i=0}^Ms^{-i}F\subset \cup_{i=-M}^Ms^iF$. For this $j$, we have $(p_v(z, x))_j=x_j$ by definition \eqref{eq: def of p_v(x, y)}. Hence \eqref{eq: identity 4 in claim 4 in ccsr} is verified. Similarly, we may check \eqref{eq: identity 6 in claim 4 in ccsr} holds.

To check \eqref{eq: identity 6 in claim 4 in ccsr}, recall that when checking \eqref{eq: identity 2 in claim 4 in ccsr thm} holds, we have shown $(p_v(z, x))_j=(p_v(z, y))_j$ for all $j\in \cup_{i\geq 0}s^iF_i$. So we may assume now $j\in \cup_{i=0}^Ms^iF\subseteq \cup_{i={-M}}^Ms^iF\subset E'$. Then for this $j$, we have $(p_v(z, x))_j=x_j=y_j=(p_v(z, y))_j$. Thus \eqref{eq: identity 5 in claim 4 in ccsr} holds.

To sum up, we have shown that if $x$ is sufficiently close to $y$ such that $x_f=y_f$ for all $f\in E'$, then $b(x)=b(y)$.
\end{proof}

\textbf{Claim 6}: There exists a continuous map $b': X\to G$ such that $b'(x)=b(x)$ for $\mu$-a.e. $x\in X$.
\begin{proof}
Set $X'''=\cap_{\gamma\in \Gamma}\gamma (X'')\subseteq X''$. Since $\Gamma$ is countable and $\mu(X'')=1$, we know that $\mu(X''')=1$. Thus $X'''$ is dense in $X$ since $\mu(U)>0$ for any non-empty open set $U\subseteq X$. Fix any $x\in X$, pick a sequence $x_i\in X'''$ such that $\lim\limits_{i\to+\infty}x_i=x$, then define $b'(x)=\lim\limits_{i\to+\infty}b(x_i)$.

By claim 5,  $b': X\to G$ is a well-defined continuous map such that $b'(x)=b(x)$ for all $x\in X'''$.
\end{proof}

Notice that $X'''\subseteq X''\subseteq X'=M_C\subseteq D$ and $X'''$ is $\Gamma$-invariant. Combining claim 6 with the definition of $D$ appeared in the proof of claim 2, we deduce that  
$c(g, x)=b(gx)^{-1}\phi(g)b(x)=b'(gx)^{-1}\phi(g)b'(x)$ for all $g\in \Gamma$ and $x\in X'''$.

Fix any $g\in \Gamma$. Since the two continuous maps $X\ni x\mapsto c(g, x)\in G$ and $X\ni x\mapsto b'(gx)^{-1}\phi(g)b(x)\in G$ agree on $X'''$, a dense subset of $X$, they must be equal everywhere. This means that $c$ is cohomologous to $\phi$ via the continuous map $b'$.
\end{proof}

\section{Continuous orbit equivalence rigidity for generalized wreath product actions}\label{section: coe type result for generalized wreath products}

In this section, we study the continuous orbit equivalent classes of generalized wreath product actions defined below and prove Theorem \ref{thm: main thm for the whole paper}.

\begin{definition}[Generalized wreath product actions]\label{def: generalized wreath product actions}
Let $\Gamma\curvearrowright I$ be an action, where $I$ is an index set. Let $\Lambda\curvearrowright X_0$ be a continuous action. 
The generalized wreath product action $\Lambda\wr_I\Gamma\curvearrowright X:=X_0^I$, where $X_0$ is a topological space, is defined as follows:\begin{align*}
(\gamma x)_i=x_{\gamma^{-1}i},~~
[(\oplus_{i\in I}\lambda_i)x]_i=\lambda_i x_i.
\end{align*}
Here, $\gamma\in \Gamma$, $x=(x_i)_{i\in I}\in X$ and $\oplus_{i\in I}\lambda_i\in \oplus_I\Lambda=\Lambda^{(I)}$.
\end{definition}

\begin{lem}\label{lem: topological freeness for generalized wreath product actions}
Let $\Gamma\curvearrowright I$ be an action such that for every $g\in \Gamma\setminus\{e\}$, there are infinitely many $i\in I$ with $gi\neq i$. Let $\Lambda\curvearrowright X_0$ be a continuous action on a topological space $X_0$. Then $\Lambda\wr_I\Gamma\curvearrowright X:=X_0^I$ is topologically free iff $\Lambda\curvearrowright X_0$ is topologically free. If we further assume $X_0$ is a finite set (equipped with the discrete topology), then these two conditions are also equivalent to $\Lambda\curvearrowright X_0$ is free, i.e. $\lambda x_0\neq x_0$ for any $e\neq \lambda\in \Lambda$ and $x_0\in X_0$ (and hence $\Lambda$ is finite). 
\end{lem}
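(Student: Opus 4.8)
The plan is to prove the three equivalences in the chain
\[
\Lambda\curvearrowright X_0 \text{ topologically free} \iff \Lambda\wr_I\Gamma\curvearrowright X \text{ topologically free},
\]
and then, under the extra hypothesis that $X_0$ is finite and discrete, to add the further equivalence with freeness of $\Lambda\curvearrowright X_0$. Throughout I write a generic element of $\Lambda\wr_I\Gamma$ as $w=(\oplus_{i\in I}\lambda_i)\gamma$ with $\gamma\in\Gamma$ and $\lambda_i\in\Lambda$, $\lambda_i=e$ for all but finitely many $i$.

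For the forward direction of the first equivalence I would argue contrapositively: if $\Lambda\curvearrowright X_0$ is not topologically free, there is a nontrivial $\lambda\in\Lambda$ whose fixed-point set in $X_0$ has nonempty interior, so it contains a nonempty open $U_0\subseteq X_0$. Pick any single index $i_0\in I$, put $\lambda_{i_0}=\lambda$ and $\lambda_i=e$ otherwise, and take $\gamma=e$; the corresponding $w\in\Lambda\wr_I\Gamma$ is nontrivial and fixes the nonempty open cylinder set $\{x\in X: x_{i_0}\in U_0\}$ pointwise, so the wreath product action is not topologically free. For the converse I suppose $w=(\oplus\lambda_i)\gamma$ is a nontrivial element whose fixed-point set in $X$ has nonempty interior, hence contains a nonempty basic open cylinder $C$ determined by a finite set $F\Subset I$ and an open choice $U_f\subseteq X_0$ for $f\in F$. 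The key sub-step is to rule out $\gamma\neq e$: if $\gamma\neq e$, the standing assumption (clause 2) gives infinitely many $i\in I$ with $\gamma\cdot i\neq i$, and comparing the $i$-th coordinate of $x$ and $wx$, namely $x_i$ versus $\lambda_i x_{\gamma^{-1}i}$, one can enlarge the cylinder / move points so that the equation $x_i=\lambda_i x_{\gamma^{-1}i}$ fails on an open set — concretely, choose $i$ outside $F$ with $\gamma^{-1}i$ also outside $F$ (possible since only finitely many indices are constrained and infinitely many are moved), and then $x_i,x_{\gamma^{-1}i}$ range freely over $X_0$, so fixing $x$ on $C$ forces nothing and in fact there are points of $C$ not fixed by $w$ because $X_0$ has at least two elements. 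Hence $\gamma=e$. Then $w=\oplus\lambda_i$ is nontrivial, so some $\lambda_{i_0}\neq e$; if $i_0\in F$, then on $C$ we have $\lambda_{i_0}x_{i_0}=x_{i_0}$ for all $x_{i_0}\in U_{i_0}$, a nonempty open set, so $\lambda_{i_0}$ is not topologically free; if $i_0\notin F$, then $x_{i_0}$ ranges over all of $X_0$ on $C$, forcing $\lambda_{i_0}$ to fix $X_0$ pointwise, again contradicting nontriviality unless $\Lambda\curvearrowright X_0$ fails topological freeness. Either way $\Lambda\curvearrowright X_0$ is not topologically free.

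For the finite discrete case, note that when $X_0$ carries the discrete topology, a subset has nonempty interior iff it is nonempty, so ``topologically free'' for $\Lambda\curvearrowright X_0$ means: no nontrivial $\lambda$ has a fixed point, i.e. the action is free. Freeness of a group action on a nonempty finite set forces the group to embed in the symmetric group on $X_0$ (via the orbit through any point, whose stabilizer is trivial), hence $\Lambda$ is finite. Conversely freeness trivially implies topological freeness in the discrete setting. So I would simply remark that in the discrete case ``topologically free $=$ free'' for $\Lambda\curvearrowright X_0$, and that a free action on a finite set has finite acting group, closing the last claim.

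The main obstacle is the converse direction of the first equivalence, specifically the step showing $\gamma\neq e$ is impossible: one must carefully use the standing assumption to locate an index $i$ with both $i\notin F$ and $\gamma^{-1}i\notin F$ (and, if one wants, $i\neq\gamma^{-1}i$), then exploit $|X_0|\ge 2$ to exhibit a point in the cylinder $C$ not fixed by $w$. This is a finite bookkeeping argument but it is where the hypotheses on $\Gamma\curvearrowright I$ genuinely enter, and it must be done with enough care that the ``open set'' bookkeeping (cylinders, finitely many constrained coordinates) is correct; the rest of the proof is routine coordinate-chasing.
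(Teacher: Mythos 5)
Your proof is correct and follows the same strategy as the paper: the forward direction is the identical cylinder-set construction, and the converse uses the same case split on whether the $\Gamma$-part $\gamma$ is trivial, with the standing assumption producing an index moved by $\gamma$ that lies outside the constrained coordinates when $\gamma\neq e$. Two small notes: the condition $\gamma^{-1}i\neq i$ in your parenthetical is not optional (it is what makes the constraint $\lambda_i x_{\gamma^{-1}i}=x_i$ cut out a proper subset of $X_0\times X_0$), and both your proof and the paper's implicitly require $|X_0|\geq 2$ in the $\gamma\neq e$ case, an assumption absent from the lemma statement but satisfied in all the paper's applications.
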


\begin{proof}
Let us check the equivalence in the first part.

$\Leftarrow$:
Let $ag\in \Lambda\wr_I\Gamma$ be a non-trivial element, where $g\in \Gamma$ and $a\in \oplus_I\Lambda$. We aim to show that the fixed point set $Fix(ag)$ has empty interior.

Write $supp(a)=\{i\in I: a_i\neq e\}$. Note that $x\in Fix(ag)$ iff $a_ix_{g^{-1}i}=x_i$ for all $i\in I$.

Case 1: $g\neq e$.

Assume that $Fix(ag)\supset U$, where $U$ is a standard cylinder set in $X$ determined by coordinates in a finite non-empty set $J\subset I$.
By our assumption on $\Gamma\curvearrowright I$, we may find some $i\not\in J\cup gJ\cup supp(a)$ such that $gi\neq i$. For this $i$, we have $a_i=e$ and hence $x_i=x_{g^{-1}i}$ for any $x\in Fix(ag)$. Since $i\not\in J\cup gJ$, there exists some $x\in U\subset Fix(ag)$ such that $x_i\neq x_{g^{-1}i}$, a contradiction.

Case 2: $a\neq e$ and $g=e$. 

In this case, $x\in Fix(a)$ iff $x_i=a_ix_i$ for all $i\in supp(a)$. Suppose $Fix(a)\supseteq U$ for some non-empty open set $U\subseteq X$, then we may assume $U=\{x\in X: x_j\in V_j,~\forall~j\in J\}$, where $supp(a)\subseteq J\subseteq I$ is a finite set and $\emptyset\neq V_j\subset X_0$ are open for all $j\in J$. Hence $V_i\subset Fix(a_i)$ for all $i\in supp(a)$, a contradiction.

$\Rightarrow$: Assume that $\Lambda\curvearrowright X_0$ is not topologically free, then there exists some non-trivial $\lambda\in \Lambda$ such that its fixed point set $Fix(\lambda)$ has non-empty interior. Let $V\subset Fix(\lambda)$ be some non-empty open subset. Fix any $i\in I$, let $\lambda_{(i)}$ denote the element $\lambda$ in the $i$-th component of $\oplus_I\Lambda$. Observe that $Fix(\lambda_{(i)})\supseteq \{x\in X: x_i\in V\}$, which is a non-empty open set. Hence $\Lambda\wr_I\Gamma\curvearrowright X$ is not topologically free, a contradiction.

If $X_0$ is  finite, then clearly $\Lambda\curvearrowright X_0$ is free iff it is topologically free.
\end{proof}

\begin{remark}\label{remark: minimality for generalized wreath product actions}
It is easy to check that if $\Lambda\curvearrowright X_0$ is free, then $\Lambda\wr_I\Gamma\curvearrowright X_0^I$ has no fixed points. If $\Lambda\curvearrowright X_0$ is minimal, then so is the action $\Lambda\wr_I\Gamma\curvearrowright X_0^I$ since the subaction $\oplus_I\Lambda\curvearrowright X_0^I$ is minimal.
\end{remark}

In the following lemma, we need to use the notion of infinite tensor product of C$^*$- and von Neumann algebras, see e.g.  \cite[\S 1, Chapter XIV]{Tak_book} and \cite[\S 5.1.2]{AP} for basic properties on it.
\begin{lem}\label{lem: fixed point algebra under the wm assumption}
Let $\Gamma\curvearrowright I$ be an action on an infinite set $I$ such that for every $i\in I$, the stabilizer subgroup $Stab(i)$ acts with infinite orbits on $I\setminus\{i\}$. Let $(M,\tau)$ be a  finite von Neumann algebra with a faithful normal trace $\tau$. Let $\widetilde{M}:=\bar{\otimes}_{i\in I}(M,\tau)$ be the infinite tensor product of $(M,\tau)$. Let $A\subsetneq M$ be a non-trivial C$^*$-subalgebra and $\widetilde{A}:= \otimes_{i\in I}A\subset \widetilde{M}$, the infinite (minimal) tensor product of C$^*$-algebras. Consider the generalized (non-commutative) Bernoulli shift action $\Gamma\curvearrowright \bar{\otimes}_{i\in I}M=\widetilde{M}$, i.e. $g(\otimes_ia_i)=\otimes_ia_{g^{-1}i}$ for each $g\in \Gamma$ and $\otimes_ia_i\in \widetilde{M}$ and its restriction on the C$^*$-subalgebra $\widetilde{A}$, which is $\Gamma$-invariant. 
Then the fixed point C$^*$-subalgebra of $\widetilde{A}$ under the stabilizer subgroup $Stab(i)$ is contained in $A_i$, where $A_i$ is the image of $A$ under the natural embedding into the $i$-th component in $\widetilde{A}$. 
\end{lem}
\begin{proof}
Note that $\widetilde{M}$ is also a finite von Neumann algebra and there is a unique trace on it, denoted by $\tau'$, such that $\tau'(x)=(\otimes_F\tau)(x)$ for any $x\in (\otimes_FM, \otimes_F\tau)$, where $F$ is any non-empty finite set of $I$, see \cite[\S 5.1.2]{AP}.
Therefore, we may consider the $\tau'$-preserving normal conditional expectation $E_i: \widetilde{M}\rightarrow M_i$, where $M_i$ denotes the image of $M$ under the natural embedding into the $i$-th component in $\widetilde{M}$. Then $E_i(\widetilde{A})\subset A_i$. 

To prove this, take any $x\in \widetilde{A}$. For each $\epsilon>0$, we may find a non-empty finite set $J\subset I\setminus\{i\}$, a non-empty finite subset $K$ of positive integers, some $a^{(n)}_i\in M_i$ and $b^{(n)}\in \bar{\otimes}_{j\in J}M_j$ for each $n\in K$ such that  $||a-\sum_{n\in K}a_i^{(n)}\otimes b^{(n)}||<\epsilon$. Observe that $E_i(\sum_{n\in K}a_i^{(n)}\otimes b^{(n)})=\sum_{n\in K}a_i^{(n)}\tau'(b^{(n)})\in A_i$. Since $||E_i(a)-E_i(\sum_{n\in K}a_i^{(n)}\otimes b^{(n)})||\leq ||a-\sum_{n\in K}a_i^{(n)}\otimes b^{(n)}||<\epsilon$, we deduce that $E_i(a)\in A_i$.

Take any $a\in \widetilde{A}$ such that $\gamma(a)=a$ for all $\gamma\in Stab(i)$. We aim to show $a\in A_i$. After replacing $a$ by $a-E_i(a)$, we may assume $E_i(a)=0$ and let us show $a=0$.

For any $\epsilon>0$, take $\sum_{n\in K}a_i^{(n)}\otimes b^{(n)}$ as above such that $||a-\sum_{n\in K}a_i^{(n)}\otimes b^{(n)}||<\epsilon$. 
Note that since $E_i(a)=0$, we may further assume $\tau'(b^{(n)})=0$ for each $n\in K$. Indeed,
notice that 
\begin{align*}
&||a-[\sum_{n\in K}a_i^{(n)}\otimes b^{(n)}-E_i(\sum_{n\in K}a_i^{(n)}\otimes b^{(n)})]||\\
&\leq ||a-\sum_{n\in K}a_i^{(n)}\otimes b^{(n)}||+||E_i(a)-E_i(\sum_{n\in K}a_i^{(n)}\otimes b^{(n)})||~\quad(\text{since $E_i(a)=0$})\\
&\leq 2||a-\sum_{n\in K}a_i^{(n)}\otimes b^{(n)}||\leq 2\epsilon.
\end{align*}
Moreover, $\sum_{n\in K}a_i^{(n)}\otimes b^{(n)}-E_i(\sum_{n\in K}a_i^{(n)}\otimes b^{(n)})=\sum_{n\in K}a_i^{(n)}\otimes(b^{(n)}-\tau'(b^{(n)}))$. So we may replace $b^{(n)}$ (respectively, $\epsilon$) by $b^{(n)}-\tau'(b^{(n)})$ (respectively, $2\epsilon$) to make the further assumption that $\tau'(b^{(n)})=0$ for all $n\in K$.

By our assumption on $\Gamma\curvearrowright I$, we know $Stab(i)$ acts on $I\setminus \{i\}$ with infinite orbits, so we may choose some $\gamma\in Stab(i)$ such that $\gamma J\cap J=\emptyset$ by Lemma \ref{lem: Popa-Vaes lemma}. Then,
\begin{align*}
&||a||_2^2=\langle a, a\rangle=\langle a, \gamma(a)\rangle
\overset{\epsilon(2||a||+\epsilon)}{\approx}\langle \sum_{n\in K}a_i^{(n)}\otimes b^{(n)}, \gamma(\sum_{m\in K}a_i^{(m)}\otimes b^{(m)}) \rangle\\
&=\langle \sum_{n\in K}a_i^{(n)}\otimes b^{(n)}\otimes 1, \sum_{m\in K}a_i^{(m)}\otimes 1\otimes \gamma(b^{(m)})\rangle\\
&=\sum_{n, m\in K}\tau'(a^{(n)}_i{a^{(m)}_i}^*)\cdot \tau'(b^{(n)})\cdot\tau'(\gamma({b^{(m)}}^*))=0 ~\quad\text{(since $\tau'(b^{(n)})=0$ for all $n\in K$)}.
\end{align*}
Here, the second last equality holds since $b^{(n)}\in \bar{\otimes}_{j\in J}M_j$, $\gamma(b^{(m)})\in \bar{\otimes}_{j\in \gamma J}M_j$ and $\gamma J\cap J=\emptyset$.
Since $\epsilon>0$ is arbitrary, we deduce that $a=0$.
\end{proof}
\begin{remark}\label{remark: how to apply the fixed point subalgebra}
Lemma \ref{lem: fixed point algebra under the wm assumption} will be applied in the proof of Theorem \ref{thm: main thm for generalized wreath product actions} to $\widetilde{M}=L^{\infty}(X_0^I, \mu_0^I)\rtimes (\oplus_I\Lambda)$, $\widetilde{A}=C(X_0^I)\rtimes_r(\oplus_I\Lambda)$, $M=L^{\infty}(X_0, \mu_0)\rtimes \Lambda$ and $A=C(X_0)\rtimes_r\Lambda$. Here, $\Lambda\curvearrowright X_0$ is a free action of a non-trivial finite group $\Lambda$ on a finite set $X_0$ with $|X_0|\geq 2$, $\mu_0$ is the uniform measure on $X_0$, $\mu_0^I$ is the product measure on $X_0^I$ and $\oplus_I\Lambda\curvearrowright X_0^I$ denotes the action $(\oplus_{i\in I}\lambda_i)x=(\lambda_ix_i)_{i\in I}$ for any $\oplus_{i\in I}\lambda_i\in \oplus_I\Lambda$ and  $x=(x_i)_{i\in I}\in X_0^I$. 

Note that $\widetilde{M}=U^*(\bar{\otimes}_IM)U$, where $U$ is the unitary operator defined as the composition of the following natural unitary conjugacies: 
\begin{align*}
L^2(\widetilde{M})&\cong L^2(X_0^I,\mu_0^I)\bar{\otimes}\ell^2(\oplus_I\Lambda)\\
&\cong (\otimes_I(L^2(X_0, \mu_0), 1_{X_0})\bar{\otimes}(\otimes_I(\ell^2(\Lambda), \delta_e))\\
&\cong \otimes_I(L^2(X_0,\mu_0)\bar{\otimes}\ell^2(\Lambda), 1_{X_0}\otimes\delta_e)\\
&\cong\otimes_I(L^2(M),1_{X_0}\otimes\delta_e).
\end{align*}
Moreover, for any fixed $i\in I$, the embedding $M_i\overset{Ad(U^*)}{\longrightarrow}\widetilde{M}$ is induced by the map $X_0^I\ni (x_i)_{i\in I}\mapsto x_i\in X_0$ and the inclusion $\Lambda\hookrightarrow \oplus_I\Lambda$ as the $i$-th summand.
\end{remark}

We fix throughout a transitive action $\Gamma\curvearrowright I$ and make the following \textbf{standing assumption}. 
\begin{enumerate}
\item[(S1)] $\Gamma\curvearrowright I$ has infinite orbits.
\item[(S2)] For every $i\in I$, the stabilizer subgroup Stab($i$) acts with infinite orbits on $I\setminus \{i\}$. For every $g\in\Gamma\setminus \{e\}$, there are infinitely many $i\in I$ with $gi\neq i$.
\item[(S3)] The group $\Gamma$ is infinite and icc.
\end{enumerate}

Note that the standing assumption is borrowed  from \cite[Theorem 5.1]{dv}. (S1) and (S2) go back to the work of Popa \cites{popa_cocycle1, popa_cocycle2} and Popa-Vaes \cite{PV_adv}, where they studied the cocycle and orbit equivalence superrigidity for generalized Bernoulli shifts. Observe that part of (S2) has appeared  in Lemma \ref{lem: topological freeness for generalized wreath product actions} and Lemma \ref{lem: fixed point algebra under the wm assumption}.

A typical example that satisfies the standing assumption is the left-right translation action, which  we record as a proposition for reference.

\begin{proposition}\label{prop: left-right translation actions satisfy the standing assumption}
Let $\Gamma=\Gamma_1\times \Gamma_1$, where $\Gamma_1$ is an infinite and icc group. Then the left-right translation $\Gamma\curvearrowright \Gamma_1:=I$ satisfies the standing assumption.
\end{proposition}
\begin{proof}
(S1) and (S3) hold since $\Gamma$ is infinite and icc. Let us check (S2) holds.

Fix any $\gamma\in I$ and $\gamma'\in I\setminus \{\gamma\}$. It is clear that $Stab(\gamma)=\{(s, \gamma^{-1}s\gamma):~s\in \Gamma_1\}$. Then the $Stab(\gamma)$-orbit of $\gamma'$ is equal to the set $\{s\gamma'\gamma^{-1} s^{-1}\gamma: ~s\in \Gamma_1\}$. Since $\gamma'\gamma^{-1}$ is non-trivial in $\Gamma_1$, the icc assumption implies the above orbit   is infinite. 

Take any non-trivial $(s, t)\in \Gamma$. Clearly, $(s, t)\gamma\neq\gamma$ iff $t\neq \gamma^{-1}s\gamma$. If $s=e$, then $t\neq e=\gamma^{-1}s\gamma$ for all $\gamma\in I$. If $s\neq e$, then the icc assumption implies $\{\gamma^{-1}s\gamma:~\gamma\in \Gamma_1\}$ is an  infinite set, and hence there are infinitely many $\gamma$ such that $t\neq \gamma^{-1}s\gamma$. Thus, (S2) holds true.  
\end{proof}

 Now, we are ready to state the main theorem in this section, which can be viewed as an analogue of a special case of \cite[Theorem 5.1]{dv} in the topological setting.

\begin{thm}\label{thm: main thm for generalized wreath product actions}
Let $X_0$ be any finite set with $|X_0|\geq 2$.
We assume the following conditions hold:
\begin{itemize}
\item $\Gamma\curvearrowright I$ is a transitive action satisfying the standing assumption. 
\item The generalized full shift $\Gamma\curvearrowright X:=X_0^I$ is a continuous cocycle superrigid action.
\item $\Lambda\curvearrowright X_0$ is a free action of a non-trivial finite group $\Lambda$.
\end{itemize} 
Let $G\curvearrowright Y$ be a topologically free continuous action. Then the following two statements are equivalent.
\begin{itemize}
\item The generalized wreath product action $\Lambda\wr_I\Gamma\curvearrowright X$ is continuously orbit equivalent to $G\curvearrowright Y$.
\item There exists some free action of a countable group $\rho: \Lambda_0\curvearrowright X_0$ such that $\rho$ and $\Lambda\curvearrowright X_0$ are continuously orbit equivalent and $G\curvearrowright Y$ is topologically conjugate to the generalized wreath product action $\Lambda_0\wr_I\Gamma\curvearrowright X_0^{I}$ defined using $\rho$ and $\Gamma\curvearrowright I$.
\end{itemize}
\end{thm}

Note that if we take $\Gamma\curvearrowright I$ to be the left-right translation action $\Gamma_1\times \Gamma_1\curvearrowright\Gamma_1$, where $\Gamma_1$ is any finitely generated, non-amenable, non-torsion and icc group, then the first two conditions in Theorem \ref{thm: main thm for generalized wreath product actions} are satisfied by Corollary \ref{main corollary} (ii) and Proposition \ref{prop: left-right translation actions satisfy the standing assumption}.

\begin{cor}\label{main corollary2}
Let $X_0$ be any finite set with $|X_0|\geq 2$. We assume the three conditions in Theorem \ref{thm: main thm for generalized wreath product actions} hold. Then there are at most finitely many topologically free continuous actions $G\curvearrowright Y$ which are continuously orbit equivalent to the generalized wreath product action $\Lambda\wr_I\Gamma\curvearrowright X_0^I$  up to topological conjugacy. In particular, if $|\Lambda|=|X_0|=p$, a prime number, then $\Lambda\wr_I\Gamma\curvearrowright X_0^I$  is a continuous orbit equivalence superrigid action.
\end{cor}
\begin{proof}
Assume that $G\curvearrowright Y$ is continuously orbit equivalent to the generalized wreath product action. Then Theorem \ref{thm: main thm for generalized wreath product actions} implies $G\curvearrowright Y$ is topologically conjugate to $\Lambda_0\wr_I\Gamma\curvearrowright X_0^I$, where $\rho:~\Lambda_0\curvearrowright X_0$ is a free action that is continuously orbit equivalent to $\Lambda\curvearrowright X_0$. From the $\Lambda$-and $\Lambda_0$-action are free, we can deduce that $|X_0|\geq |\Lambda|=|\Lambda x|=|\Lambda_0\phi(x)|=|\Lambda_0|$ for any $x\in X_0$, where $\phi$ is a homeomorphism of $X_0$ witnessing the continuous orbit equivalence. Hence, we have finitely many such groups $\Lambda_0$ up to group isomorphism. Fix any such a group $\Lambda_0$. Since the action $\rho: \Lambda_0\curvearrowright X_0$ is free, we know $\rho: \Lambda_0\rightarrow Sym(X_0)$ is injective. Clearly, we have less than $(|X_0|!)^{|\Lambda_0|}$-many such injections, and hence only finitely many such actions $\rho$ up to topological conjugacy. Obviously, this implies we have finitely many such $G\curvearrowright Y$ up to topological conjugacy. 

If $|\Lambda|=|X_0|=p$, a prime number, then any free action $\Lambda\curvearrowright X_0$ is topologically conjugate to the natural left translation$\frac{\mathbb{Z}}{p\mathbb{Z}}\curvearrowright \frac{\mathbb{Z}}{p\mathbb{Z}}$, so is the free action $\Lambda_0\curvearrowright X_0$ as $|\Lambda_0|=|\Lambda|=p$.
\end{proof}

Now, let us comment on the proof of Theorem \ref{thm: main thm for generalized wreath product actions}, which is based on the proof of \cite[Theorem 5.1]{dv} but with the following modifications/differences:

1. In \cite{dv}, the proof was written using equivalence relations in the measurable setting, but we prefer using group actions and sometimes topological groupoids whenever convenience.

2. We are not able to translate the proof in \cite{dv} (even with trivial amplification, i.e. $t=1$ there) directly to a proof in the topological setting. For example, it used Popa's cocycle superrigidity theorem for non-commutative generalized Bernoulli shifts, which is not available in the topological setting.

3. To circumvent the above issue, the key observation is that a natural modification of the proof of steps 1-4 in \cite{dv} actually shows we may replace the initial coe coupling with a new one, which simplifies the proof of following steps, see the sentence before step 7 in our proof. This is the main difference between our proof with the one in \cite{dv} in our opinion.

\begin{proof}[Proof of Theorem \ref{thm: main thm for generalized wreath product actions}]
It is easy to see the second statement implies the first. Indeed, suppose $(c, c', \phi)$ is a coe coupling for the continuous orbit equivalence between $\rho: \Lambda_0\curvearrowright X_0$ and $\Lambda\curvearrowright X_0$. Then $(\widetilde{c}, \widetilde{c'}, \widetilde{\phi})$ is a coe coupling for the generalized wreath product action $\Lambda_0\wr_I\Gamma\curvearrowright X_0^I$ and $\Lambda\wr_I\Gamma\curvearrowright X_0^I$. Here, $\widetilde{\phi}$ is the homeomorphism of $X_0^I$ defined by $\widetilde{\phi}((x_i)_{i\in I})=(\phi(x_i))_{i\in I}$, where $(x_i)_{i\in I}\in X_0^I$. $\widetilde{c}: \Lambda_0\wr_I\Gamma\times X_0^I\to \Lambda\wr_I\Gamma$ is the cocycle defined by $\widetilde{c}(\gamma, (x_i)_{i\in I})=\gamma$ and $\widetilde{c}(\oplus_{i\in I}\lambda_i, (x_i)_{i\in I})=\oplus_{i\in I}c(\lambda_i, x_i)$, where $\gamma\in \Gamma$, $\oplus_{i\in I}\lambda_i\in \oplus_I\Lambda_0$ and $(x_i)_{i\in I}\in X_0^I$. $\widetilde{c'}$ is defined similarly using $c'$.

So we assume that the first statement holds.

Let $(c, c', \varphi)$ be a coe coupling between the two initial actions. More precisely, $\varphi: X\to Y$ is a homeomorphism witnessing the continuous orbit equivalence between $\Lambda\wr_I\Gamma\curvearrowright X$ and $G\curvearrowright Y$, $c: \Lambda\wr_I\Gamma\times X\to G$ is the orbit cocycle and $c': G\times Y\to \Lambda\wr_I\Gamma$ is the inverse orbit cocycle, see the paragraph right after Definition \ref{def: coe} for the meaning of inverse cocycles.

Let $\mu=\mu_0^I$ be the product measure on $X$, where $\mu_0$ is the uniform measure on the finite set $X_0$, i.e. $\mu(\{a\})=\frac{1}{|X_0|}$ for every $a\in X_0$. Clearly, $\mu$ is $\Lambda\wr_I\Gamma$-invariant. Observe that $\mu(U)>0$ for any non-empty open set in $X$. In the proof, we write $\Lambda^{(I)}$ for $\oplus_I\Lambda$ sometimes without mention.

We number the steps in the proof for ease of reference.

\textbf{Step 1}. By our assumption,
there exists some continuous map $L: X\to G$ and a group homomorphism $\delta: \Gamma\to G$ such that $c(g, x)=L(gx)^{-1}\delta(g)L(x)$ for all $g\in \Gamma$ and $x\in X$. Let $\phi(x)=L(x)\varphi(x)$. Note that $\phi(gx)=\delta(g)\phi(x)$ for all $g\in \Gamma$ and $x\in X$.
The key difficulty is to show $\phi$ is still a homeomorphism. It is clear that $\phi$ is continuous and the main issue is to show it is a bijection, which will be done in step 4.

For any $g\in \Lambda\wr_I\Gamma$ and $x\in X$, define
\begin{align}\label{def: definition of the cocycle omega}
\omega(g, x)=L(gx)c(g, x)L(x)^{-1}.
\end{align}
Clearly, 
\begin{align}\label{eq: omega(g,x)=delta(g)}
\omega(g, x)=\delta(g)~\text{for all $g\in \Gamma$ and}~\phi(gx)=\omega(g, x)\phi(x)~\text{for all $g\in \Lambda\wr_I\Gamma$.}
\end{align}

Let us show that $Ker(\delta)=\{e\}$. 

Take any $g\in Im(L)<G$ and let $A=L^{-1}(\{g\})\subset X$. We observe that $A\cap s^{-1}A=\emptyset$ for any $e\neq s\in Ker(\delta)$. Assume not, then take any $x\in A\cap s^{-1}A$, i.e. $L(x)=g=L(sx)$ and we deduce that $c(s, x)=L(sx)^{-1}\delta(s) L(x)=e$. Since for each $x\in X$, $\Lambda\wr_I\Gamma\ni g\mapsto c(g, x)\in G$ is a bijection, we get $s=e$, a contradiction. Note that $\mu(A)>0$ as $A$ is a non-empty open set. Since $\mu$ is $\Gamma$-invariant, we deduce that $Ker(\delta)$ is finite. Since $\Gamma$ is assumed to be icc by (S3), we get that $Ker(\delta)=\{e\}$. 

\textbf{Step 2}. For any $\lambda\in \Lambda$ and $i\in I$, we write $\lambda_{(i)}$ to mean the element $\lambda$ in the $i$-th summand of $\oplus_I\Lambda<\Lambda\wr_I\Gamma$. We prove that
\begin{align}\label{eq: omega descends to omega_i}
\omega(\lambda_{(i)}, x)=\omega_i(\lambda, x_i)~\text{for all $x\in X$ and all $\lambda\in\Lambda$},
\end{align}
where $\omega_i: \Lambda\times X_0\to G$ is a cocycle.

For each $\lambda\in \Lambda$, define $F: X\to G$ by $F(x)=\omega(\lambda_{(i)}, x)$. It suffices to show $F$ only depends on the $i$-th coordinate of $x$.

Fix any $g\in Stab(i)<\Gamma$. We compute
\begin{align*}
F(gx)&=\omega(\lambda_{(i)}, gx)\\
&=\omega(\lambda_{(i)}g, x)\omega(g, x)^{-1}~\quad\text{(by cocycle relation)}\\
&=\omega(g\lambda_{(i)}, x)\omega(g, x)^{-1}~\quad\text{(since $g\in Stab(i)$ implies $g\lambda_{(i)}=\lambda_{(i)}g$)}\\
&=\omega(g, \lambda_{(i)}x)\omega(\lambda_{(i)}, x)\omega(g, x)^{-1}~\quad\text{(by cocycle relation)}\\
&=\delta(g)F(x)\delta(g)^{-1}~\quad\text{(by \eqref{eq: omega(g,x)=delta(g)} and def. of $F$)}.
\end{align*}
Our goal is to show that for any $x, y\in X$, if $x_i=y_i$, then $F(x)=F(y)$. Note that we may think of $F$ as defined on $X_0^{I\setminus \{i\}}$. Indeed, by fixing an arbitrary $x_0\in X_0$, we may replace $F$ by $F'(x):=F(\overline{x_0 x})$ for all $x\in X_0^{I\setminus \{i\}}$, which also satisfies that $F'(gx)=\delta(g)F'(x)\delta(g)^{-1}$. Here, $\overline{x_0x}$ denotes the natural element in $X_0^I=X_0\times X_0^{I\setminus\{i\}}$, i.e. $(\overline{x_0x})_j=x_0$ if $j=i$ and $x_j$ otherwise.

In other words, we need to show $F$ is constant on $X_0^{I\setminus \{i\}}$.

Let $\theta(x, y):=F(x)F(y)^{-1}$ for all $x, y\in X_0^{I\setminus \{i\}}$. 
Since $Stab(i)$ acts with infinite orbits on $I\setminus \{i\}$ by (S2), the action of $Stab(i)$ on $X_0^{I\setminus \{i\}}$ is topologically weakly mixing by Lemma \ref{lem: top.w.m. for generalized full shifts}, i.e. the diagonal action $Stab(i)\curvearrowright X_0^{I\setminus \{i\}}\times X_0^{I\setminus\{i\}}$ is (topologically) transitive. 
Note that $\theta(gx, gy)=\delta(g)\theta(x, y)\delta(g)^{-1}$, hence $\theta(gx, gy)$ and $\theta(x, y)$ lie in the same conjugacy class in $G$ for all $g\in Stab(i)$. Notice that $F$ is continuous, so is $\theta$. Thus the diagonal action is topologically transitive implies that $\theta(x, y)$ and $\theta(x, x)$ lie in the same conjugacy class in $G$ for all $x, y$. Notice that $\theta(x, x)=e$, hence $\theta(x, y)=e$, i.e. $F(x)=F(y)$.

\textbf{Step 3}. Let $\Lambda_i=\langle \omega(\lambda_{(i)}, x):~x\in X,~\lambda\in \Lambda\rangle<G$. Let us check that $[\Lambda_i,\Lambda_j]=\{e\}$ for all $i\neq j$, i.e. elements in $\Lambda_i$ commute with those in $\Lambda_j$.

Take any $\lambda, \lambda'\in \Lambda$ and $x, y\in X$. Take any $z\in X$ such that $z_j=y_j$, $z_i=x_i$ and set $w:=\lambda'_{(j)}z\in X$. Note that $w_i=x_i$.
\begin{align*}
\omega(\lambda_{(i)}, x)\omega(\lambda'_{(j)}, y)
&=\omega_i(\lambda, x_i)\omega_j(\lambda', y_j)~\quad\text{(by \eqref{eq: omega descends to omega_i})}\\
&=\omega_i(\lambda, w_i)\omega_j(\lambda', z_j)~\quad\text{(since $x_i=z_i=\omega_i$ and $y_j=z_j$)}\\
&=\omega(\lambda_{(i)}, w)\omega(\lambda'_{(j)}, z)~\quad\text{(by \eqref{eq: omega descends to omega_i})}\\
&=\omega(\lambda_{(i)}\lambda'_{(j)}, z)~\quad\text{(by def. of $\omega$ and cocycle relation)}\\
&=\omega(\lambda'_{(j)}\lambda_{(i)}, z)~\quad\text{(since $i\neq j$ implies $\lambda_{(i)}\lambda'_{(j)}=\lambda'_{(j)}\lambda_{(i)}$)}\\
&=\omega(\lambda'_{(j)}, \lambda_{(i)}z)\omega(\lambda_{(i)}, z)~\quad\text{(by cocycle relation)}\\
&=\omega_j(\lambda', [\lambda_{(i)}z]_j)\omega_i(\lambda, z_i)~\quad\text{(by \eqref{eq: omega descends to omega_i})}\\
&=\omega_j(\lambda', y_j)\omega_i(\lambda, x_i)~\quad\text{(since $[\lambda_{(i)}z]_j=z_j=y_j, z_i=x_i$)}\\
&=\omega(\lambda'_{(j)}, y)\omega(\lambda_{(i)}, x)~\quad\text{(by \eqref{eq: omega descends to omega_i})}.
\end{align*}
Therefore, $[\Lambda_i, \Lambda_j]=\{e\}$.

Note that $\Lambda_{gi}=\delta(g)\Lambda_i\delta(g)^{-1}$ for all $g\in \Gamma$. Indeed, for each $\lambda\in \Lambda$, we have 
\begin{align*}
\omega(\lambda_{(gi)}, x)=\omega(g\lambda_{(i)}g^{-1}, x)
&=\omega(g, \lambda_{(i)}g^{-1}x)\omega(\lambda_{(i)}, g^{-1}x)\omega(g^{-1}, x)\\
&=\delta(g)\omega(\lambda_{(i)}, g^{-1}x)\delta(g)^{-1}~\text{by \eqref{eq: omega(g,x)=delta(g)}}. 
\end{align*}
The above calculation and \eqref{eq: omega descends to omega_i} imply that $[\Lambda_i, \delta(Stab(i))]=\{e\}$. 

Fix $i_0\in I$ and write $\Lambda_0=\Lambda_{i_0}$. Define $G_2=\Lambda_0\wr_I\Gamma$ and denote, for each $i\in I$, by $\pi_i: \Lambda_0\to \Lambda_0^{(I)}<G_2$ the embedding as the $i$-th direct summand. Since $\Gamma\curvearrowright I$ is transitive and $\Lambda_i$ commutes with both $\Lambda_j$ and $\delta(Stab(i))$, we deduce that the group homomorphism $\delta: \Gamma\to G$ can be uniquely extended to a group homomorphism $\delta: G_2\to G$ satisfying $\delta(\pi_{i_0}(\lambda))=\lambda$ for all $\lambda\in \Lambda_0$. We also find a unique 1-cocycle $\omega_2: \Lambda\wr_I\Gamma\times X\to G_2$ satisfying that 
\begin{align}\label{def: def of omega_2}
\omega_2(g, x)=g~\mbox{if $g\in \Gamma$, and}~\omega_2(\lambda_{(i_0)}, x)=\pi_{i_0}(\omega_{i_0}(\lambda, x_{i_0}))~\mbox{for all $\lambda\in \Lambda$}.
\end{align}

By construction, $\omega=\delta\circ\omega_2$. Also note that by construction,
\begin{align}\label{property: property of omega_2}
\begin{split}
\omega_2(g'a', x)=ga~ \mbox{with $g, g'\in \Gamma$ , $a'\in \Lambda^{(I)}$ and $a\in \Lambda_0^{(I)}$}\\
\Leftrightarrow~
g'=g~\mbox{and}~a_i=\omega_{i_0}(a'_i, x_i)~\mbox{for all $i\in I$}.
\end{split}
\end{align}
Here, we have written $a'=\oplus_{i\in I}a'_i\in \Lambda^{(I)}=\oplus_I\Lambda$, $a=\oplus_{i\in I}a_i\in \Lambda_0^{(I)}=\oplus_I\Lambda_0$, where $a_i'\in \Lambda$ and $a_i\in \Lambda_0$ for all $i\in I$. 

Below, we check that $\omega_2(\lambda_{(i)}, x)=\pi_i(\omega_{i_0}(\lambda, x_i))$ for every $i$. Once we know this, then \eqref{property: property of omega_2} follows by routine calculations using cocycle relation and \eqref{def: def of omega_2}.  

Take any $g\in \Gamma$ such that $gi_0=i$. Then
\begin{align*}
\omega_2(\lambda_{(i)}, x)
&=\omega_2(\lambda_{(gi_0)}, x)\\
&=\omega_2(g\lambda_{(i_0)}g^{-1}, x)\\
&=\omega_2(g, \lambda_{(i_0)}g^{-1}x)\omega_2(\lambda_{(i_0)}, g^{-1}x)\omega_2(g^{-1}, x)\\
&=g\pi_{i_0}(\omega_{i_0}(\lambda, (g^{-1}x)_{i_0}))g^{-1}~\quad\text{(by \eqref{def: def of omega_2})}\\
&=g\pi_{i_0}(\omega_{i_0}(\lambda, x_i))g^{-1}~\quad\text{(since $(g^{-1}x)_{i_0}=x_{gi_0}=x_i$)}\\
&=\pi_i(\omega_{i_0}(\lambda, x_i))~\quad\text{(since $g\pi_{i_0}(\cdot)g^{-1}=\pi_{gi_0}(\cdot)=\pi_i(\cdot)$)}.
\end{align*}

\textbf{Step 4}. Let us show $\phi: X\to Y$ is a bijection.

(I) We first show $\phi$ is injective. 

Assume that $\phi(x)=\phi(x')$ for two points $x, x'\in X$. By the definition of $\phi$, this means $L(x)\varphi(x)=L(x')\varphi(x')$, i.e. $L(x')^{-1}L(x)\varphi(x)=\varphi(x')$. Since $\Lambda\wr_I \Gamma \ni g\mapsto c(g, x)\in G$ is a bijection, we may take $s\in \Lambda\wr_I\Gamma$ such that $c(s, x)=L(x')^{-1}L(x)$, thus $\varphi(x')=\varphi(sx)$, i.e. $x'=sx$. Hence, $c(s, x)=L(sx)^{-1}L(x)$; equivalently, $\omega(s, x)=e$. We are left to show $s=e$. The proof given in step 4 in the proof of \cite[Theorem 5.1]{dv} still works here. We include it here for completeness.

Below, we prove that if $\omega(g, x)=e$ for some $g\in \Lambda\wr_I\Gamma$ and some $x\in X$, then $g=e$.

First, observe that $n_x:=\sharp\{g\in \Lambda\wr_I\Gamma:~\omega(g, x)=e\}<\infty$. To see this, note that $\omega(g, x)=e$ iff $c(g, x)=L(gx)^{-1}L(x)$, which takes finitely many values since $L$ is a continuous map from the compact space $X$ to the discrete group $G$. Then we may apply the fact that $g\mapsto c(g, x)$ is injective to conclude $n_x<\infty$.

Clearly, $1\leq n_x$, and observe that $n_{\gamma x}=n_x$ for all $\gamma\in \Gamma$. Indeed, one can check $\gamma \{g\in \Lambda\wr_I\Gamma:~\omega(g, x)=e\}\gamma^{-1}=\{g\in \Lambda\wr_I\Gamma:~\omega(g, \gamma x)=e\}$.

Since $\Gamma\curvearrowright (X, \mu)$ is ergodic, we deduce that $n_x$ is a constant, say $n$ for a.e. $x\in X$. 

Define $\xi: X\to \ell^2(\Lambda\wr_I\Gamma)$ by setting $\xi(x)=\sum_{g:~\omega(g, x)=e}\delta_g$ for a.e. $x\in X$. Note that $||\xi||_2^2=\int_X||\xi(x)||_2^2d\mu(x)=\int_X nd\mu(x)=n<\infty$, thus $\xi\in L^2(X,\ell^2(\Lambda\wr_I\Gamma))\cong \cH:=L^2(X, \mu)\overline{\otimes} \ell^2(\Lambda\wr_I\Gamma)$. Consider the unitary representation $\pi: \Gamma\curvearrowright \cH$ by $\pi(\gamma)(f\otimes \delta_s)=\gamma \cdot f\otimes\delta_{\gamma s\gamma^{-1}}$ for any $s\in \Lambda\wr_I\Gamma$, $f\in L^2(X,\mu)$ and $\gamma\in \Gamma$. Then one can check that $\xi$ is $\pi(\Gamma)$-invariant. 

Moreover, observe that $\pi$ is unitarily conjugate to the tensor product of the representations 
\begin{align*}
& Ad:~\Gamma\curvearrowright \ell^2(\Gamma):~(Ad_g\xi)(h)=\xi(g^{-1}hg)~\mbox{and}~\\
&\pi_1:~\Gamma\curvearrowright \overline{\otimes}_{i\in I}(L^2(X_0,\mu_0)\overline{\otimes} \ell^2(\Gamma), 1_{X_0}\otimes \delta_e),
\end{align*}
where $\pi_1$ acts by permuting the tensor factors of the infinite tensor product of the Hilbert space $L^2(X_0,\mu_0)\otimes \ell^2(\Gamma)$ w.r.t. the canonical unit vector $1_{X_0}\otimes \delta_e$. Since $\Gamma$ is icc by (S3) and $\Gamma\curvearrowright I$ has infinite orbits by (S1), the only $\Gamma$-invariant vectors for $Ad\otimes \pi_1$ are the multiples of $\delta_e\otimes 1$. Thus, the only $\pi(\Gamma)$-invariant vectors in $\cH$ are the multiples of  $(x\mapsto \delta_e)\in L^2(X, \ell^2(\Lambda\wr_I\Gamma))\cong \cH$. This means that $n=1$. 

On the other hand, observe that if $x_i\to x$, then $n_x\leq \limsup n_{x_i}$. Since $\{x:~n_x=1\}$ has $\mu$-measure 1, it is also dense. Then for every $x\in X$, $1\leq n_x\leq 1$, i.e. $n_x=1$ for all $x\in X$. Thus $\omega(g, x)=e$ implies $g=e$.

This finishes showing that $\phi$ is injective. In fact, observe that we have also proved the map $\omega(-, x):~\Lambda\wr_I\Gamma\rightarrow G$ is injective for every $x\in X$.

(II) We show that $\phi: X\to Y$ is onto.

Write $\nu=\varphi_*\mu$ and $X=\sqcup_{g\in G}X_g$, where $X_g=\{x\in X: L(x)=g\}$. Then
$\phi(X)=\sqcup_{g\in G}g\varphi(X_g)$ since $\phi$ is injective as proved in (I). Let us check that $\nu(\phi(X))=1$.

Indeed, write $Y=\sqcup_{t\in \Lambda\wr_I\Gamma}Y_{t,g}$, where $Y_{t, g}=\{y\in Y:~c'(g, y)=t\}$. Then for each $g\in G$, we have
\begin{align*}
&\nu(g\varphi(X_g))\\
&=\mu(\varphi^{-1}(g\varphi(X_g)))~\quad\text{(since $\nu=\phi_*\mu$)}\\
&=\mu(\varphi^{-1}(g\sqcup_{t\in \Lambda\wr_I\Gamma}(\varphi(X_g)\cap Y_{t, g})))~\quad\text{(since $\varphi(X_g)\subseteq Y=\sqcup_tY_{t, g}$)}\\
&=\mu(\sqcup_{t\in \Lambda\wr_I\Gamma}\varphi^{-1}(g(\varphi(X_g)\cap Y_{t, g})))~\quad\text{(since $\varphi$ is a homeomorphism)}\\
&=\mu(\sqcup_{t\in \Lambda\wr_I\Gamma}t\varphi^{-1}(\varphi(X_g)\cap Y_{t, g}))~\quad\text{(use $(c,c',\varphi)$ is a coe coupling and def. of $Y_{t, g}$)}\\
&=\sum_{t\in \Lambda\wr_I\Gamma}\mu(t\varphi^{-1}(\varphi(X_g)\cap Y_{t, g}))\\
&=\sum_{t\in \Lambda\wr_I\Gamma}\mu(\varphi^{-1}(\varphi(X_g)\cap Y_{t, g}))\\
&=\mu(\sqcup_{t\in \Lambda\wr_I\Gamma} \varphi^{-1}(\varphi(X_g)\cap Y_{t, g}))\\
&=\mu(X_g\cap \varphi^{-1}(\sqcup_{t\in \Lambda\wr_I\Gamma}Y_{t, g}))=\mu(X_g\cap \varphi^{-1}(Y))=\mu(X_g).
\end{align*}
Thus, $\nu(g\varphi(X_g))=\mu(X_g)$. Using it, we can continue the computation as follows: 
\begin{align*}
\nu(\phi(X))=\nu(\sqcup_{g\in G}g\varphi(X_g))=\sum_{g\in G}\nu(g\varphi(X_g))=\sum_{g\in G}\mu(X_g)=\mu(\sqcup_{g\in G}X_g)=\mu(X)=1.
\end{align*}
Thus, $\mu(\varphi^{-1}(Y\setminus \phi(X))=0$, which implies $\varphi^{-1}(Y\setminus \phi(X))$ is empty as it is an open set. Hence, $\phi$ is surjective.

\textbf{Step 5}. In step 3, we defined the group homomorphism $\delta: G_2\to G$. Put $\Sigma_2=Ker(\delta)$. We prove that $\Sigma_2\subset \Lambda_0^{(I)}$. 

Assume that $ga\in \Sigma_2$ with $g\in \Gamma\setminus\{e\}$ and $a\in \Lambda_0^{(I)}$. Take a finite subset $J\subset I$ such that $a\in \Lambda_0^{J}$. Since there are infinitely many $i\in I$ with $gi\neq i$ by (S2), we can choose $i\in I\setminus J$ such that $gi\neq i$. Put $j=gi$. Since $\Sigma_2$ is a normal subgroup of $G_2$, for every $\lambda\in \Lambda_0$, we have $\pi_i(\lambda)ga\pi_i(\lambda)^{-1}\in \Sigma_2$. Since $i\not\in J$, we have 
$\pi_i(\lambda)ga\pi_i(\lambda)^{-1}=\pi_i(\lambda)\pi_j(\lambda^{-1})ga\in \Sigma_2$,
so that $\pi_i(\lambda)\pi_j(\lambda^{-1})\in \Sigma_2$ for all $\lambda\in \Lambda_0$.

Now, since $\Lambda\curvearrowright X_0$ is non-trivial, we may take $e\neq t\in\Lambda$ and $x_0\in X_0$ such that $t x_0\neq x_0$. Write $\lambda=\omega_{i_0}(t, x_0)\in \Lambda_0$.

Let $x\in X$ be any point such that $x_i=x_0$ and $x_j=tx_0$. Let $\hat{a}\in \Lambda^{(I)}<\Lambda\wr_I\Gamma$ such that $\hat{a}_i=t$, $\hat{a}_j=t^{-1}$ and $\hat{a}_k=e$ for all $k\neq i, j$. Then by the definition of $\omega_2$, we have $\omega_2(\hat{a}, x)=\pi_i(\lambda)\pi_j(\lambda)^{-1}$. Hence $\omega(\hat{a}, x)=\delta(\omega_2(\hat{a}, x))=e$. As $\omega(-, x)$ is injective by step 4 (I), we deduce that $\hat{a}=e$, a contradiction.

\textbf{Step 6}. We prove that $\delta: G_2\to G$ is surjective.

Since $\delta\circ \omega_2=\omega$, it suffices to show $\{\omega(g, x): ~g\in \Lambda\wr_I\Gamma, x\in X\}=G$.

Fix any $g\in G$ and $x\in X$, set $x'=\phi^{-1}(g\phi(x))\in X$ (recall that $\phi: X\to Y$ is a bijection as proved in step 4). Then 
$\phi(x')=g\phi(x)$, i.e. $\varphi(x')=L(x')^{-1}gL(x)\varphi(x)$. Since $\Lambda\wr_I\Gamma\ni g\mapsto c(g, x)\in G$ is a bijection, we may find a unique $s\in \Lambda\wr_I\Gamma$ such that $c(s, x)=L(x')^{-1}gL(x)$. Then $\varphi(x')=c(s, x)\varphi(x)=\varphi(sx)$, i.e. $x'=sx$. Hence $\omega(s, x)=L(sx)c(s, x)L(x)^{-1}=g$ by \eqref{def: definition of the cocycle omega}. This shows that in fact the map $\omega(-, x)$ is onto for any $x\in X$. To sum up, we have shown that
\begin{align}\label{fact: the new omega(-, x) is a bijection}
\text{The map}~\Lambda\wr_I\Gamma\ni s\mapsto \omega(s, x)\in G~\text{is a bijection for any $x\in X$}. 
\end{align}

Define $\omega': G\times Y\to \Lambda\wr_I\Gamma$ by setting $\omega'(g, y)=s$ where $s$ is the element we get in the above procedure by starting with $x:=\phi^{-1}(y)$. Clearly,  $\omega(s, \phi^{-1}(y))=g$.

It is routine to check that $\omega'$ is a well-defined continuous cocycle and we also have $\phi^{-1}(gy)=\omega'(g, y)\phi^{-1}(y)$ for any $g\in G$ and $y\in Y$.
From this identity, the fact that $\phi$ is a homeomorphism by step 4 and together with \eqref{eq: omega(g,x)=delta(g)}, we deduce that $(\omega,\omega',\phi)$ is a new coe coupling between the two initial actions.
For the following steps, this new coe coupling will be needed.

\textbf{Step 7}. We prove that $\delta: G_2\to G$ is also injective. 

Recall that $(\omega, \omega',\phi)$ is a coe coupling for the continuous orbit equivalence between $\Lambda\wr_I\Gamma\overset{\star}{\curvearrowright} X_0^I=X$ and $G\overset{*}{\curvearrowright} Y$. To avoid confusion in this step, we have denoted the two original actions by $\star$ and $*$ respectively.
By Theorem \ref{thm: coe by isomorphic topological groupoids}, we know $(\Lambda\wr_I\Gamma)\ltimes X\cong G\ltimes Y$ as topological groupoids via the map 
\begin{align}\label{def: definition of Phi for isomorphism of groupoids}
\Phi((a'g, x))=(\omega(a'g, x), \phi(x)),~ \text{where $g\in \Gamma$, $a'\in \Lambda^{(I)}$ and $x\in X$}.
\end{align}

We claim that $\Phi(\Lambda^{(I)}\ltimes X)= \delta(\Lambda_0^{(I)})\ltimes Y$, where both sides are considered as subgroupoids.

\begin{proof}
For any $g\in\Gamma$, $a'\in \Lambda^{(I)}$ and $x\in X$, we have
\begin{align}\label{eq: restriction of iso on subgroupoids}
\begin{split}
\Phi((a'g, x))&=(\omega(a'g, x), \phi(x))~\quad\text{(by \eqref{def: definition of Phi for isomorphism of groupoids})}\\
&=(\omega(a', gx)\omega(g, x),\phi(x))~\quad\text{(by cocycle relation)}\\
&=(\delta(\omega_2(a', gx))\delta(g), \phi(x))~\quad\text{(by \eqref{eq: omega(g,x)=delta(g)} and $\delta\circ\omega_2=\omega$)}.
\end{split}
\end{align}
Set $g=e$ in \eqref{eq: restriction of iso on subgroupoids}, we get 
$\Phi((a', x))=(\delta(\omega_2(a', x)), \phi(x))\in \delta(\Lambda_0^{(I)})\ltimes Y$ from \eqref{property: property of omega_2}. This shows $\subseteq$ in the claim holds.

For the reverse inclusion, we take any $a\in \Lambda_0^{(I)}$ and $x\in X$. Since $\Phi$ is an isomorphism, we may find $a'g\in \Lambda\wr_I\Gamma$ such that $\Phi((a'g, x))=(\delta(a), \phi(x))$, where $a'\in \Lambda^{(I)}$ and $g\in \Gamma$.
From \eqref{eq: restriction of iso on subgroupoids}, this means we have $\delta(\omega_2(a', gx))\delta(g)=\delta(a)$, i.e. 
\[g\in \omega_2(a', gx)^{-1}a\cdot Ker(\delta)\subseteq \Lambda_0^{(I)}\] by \eqref{property: property of omega_2} and $Ker(\delta)\subset \Lambda_0^{(I)}$ as proved in step 5.

Therefore, $g=e$. Hence, $(\delta(a), \phi(x))=\Phi(a', x)\in \Phi(\Lambda^{(I)}\ltimes X)$.
\end{proof}
Define $\beta: \Gamma\curvearrowright G\ltimes Y$ by 
\begin{align}\label{def: definition of beta-action}
\beta_{\gamma}(g, y)=(\delta(\gamma)g\delta(\gamma^{-1}), \delta(\gamma)*y)~\text{for each $\gamma\in\Gamma$ and $(g, y)\in G\ltimes Y$}.
\end{align}
 Observe that the subgroupoid $\delta(\Lambda_0^{(I)})\ltimes Y$ is invariant under this action. Define an action $S: \Gamma\curvearrowright\Lambda^{(I)}\ltimes X$ by 
\begin{align}\label{def: definition of shift-action S}
S_{\gamma}(a', x)=(\gamma a'\gamma^{-1}, \gamma\star x), ~\text{where $a'\in \Lambda^{(I)}$ and $x\in X$}. 
\end{align} 
Then
\begin{align}\label{relation: Phi intertwins S and beta}
\Phi\circ S_{\gamma}=\beta_{\gamma}\circ \Phi~\text{on $\Lambda^{(I)}\ltimes X$ for each $\gamma\in \Gamma$}.
\end{align}
Indeed, this is based on the following computation. Let $(a',x)\in \Lambda^{(I)}\times X$. Then 
\begin{align*}
\Phi(S_{\gamma}(a', x))&\overset{\eqref{def: definition of shift-action S}}{=}\Phi((\gamma a'\gamma^{-1}, \gamma\star x))\\
&\overset{\eqref{def: definition of Phi for isomorphism of groupoids}}{=}(\omega(\gamma a'\gamma^{-1}, \gamma\star x), \phi(\gamma\star x))\\
&=(\omega(\gamma, a'\star x)\omega(a', x)\omega(\gamma^{-1}, \gamma\star x), \omega(\gamma, x)*\phi(x))\\
&\overset{\eqref{eq: omega(g,x)=delta(g)}}{=}(\delta(\gamma)\omega(a', x)\delta(\gamma)^{-1}, \delta(\gamma)*\phi(x))\\
&\overset{\eqref{def: definition of beta-action}}{=}\beta_{\gamma}(\omega(a', x), \phi(x))\overset{\eqref{def: definition of Phi for isomorphism of groupoids}}{=}\beta_{\gamma}(\Phi(a',x)).
\end{align*}
Therefore, $\Phi$ induces a $*$-isomorphism between the reduced groupoid $C^*$-algebras  
\[\Phi^*: C^*_r(\delta(\Lambda_0^{(I)})\ltimes Y)\cong C^*_r(\Lambda^{(I)}\ltimes X).\] 
Clearly, $\beta$ and $S$ also extend to $\Gamma$-actions on these C$^*$-algebras, which we denoted by $\beta^*$ and $S^*$ respectively.

Next, observe that $C_r^*(\Lambda^{(I)}\ltimes X)\cong \otimes_IC^*_r(\Lambda\ltimes X_0)$ (here we use the minimal tensor product).

To see this, first note that the natural projection onto the $i$-th coordinate $X_0^I\ni (x_j)_{j\in I}\mapsto x_i\in X_0$ and the inclusion into the $i$-th component $\Lambda\hookrightarrow \Lambda^{(I)}$ induce a natural inclusion, denoted by $\phi_i$, of C$^*$-algebras $C(X_0)\rtimes_r\Lambda\hookrightarrow C(X_0^I)\rtimes_r\Lambda^{(I)}$. Clearly, $\phi_i(C(X_0)\rtimes_r\Lambda)$ commutes with $\phi_j(C(X_0)\rtimes_r\Lambda)$ for all $i\neq j$ and $\{\phi_i(C(X_0)\rtimes_r\Lambda): i\in I\}$ generates $C(X_0^I)\rtimes_r\Lambda^{(I)}$. Thus we get the following isomorphism.
\begin{align*}
C^*_r(\Lambda^{(I)}\ltimes X)=C^*_r(\Lambda^{(I)}\ltimes X_0^I)&\cong C(X_0^I)\rtimes_r\Lambda^{(I)}\\
&\cong \otimes_I[C(X_0)\rtimes_r\Lambda]~ \quad\mbox{(by \cite[Proposition 11.4.3]{KR_II})}\\
&\cong \otimes_IC^*_r(\Lambda\ltimes X_0).
\end{align*}
Moreover, it is routine to check that the above isomorphism intertwines the induced action $S^*:~\Gamma\curvearrowright C^*_r(\Lambda^{(I)}\ltimes X)$ with the generalized (non-commutative) Bernoulli shift action $\Gamma\curvearrowright\otimes_IC^*_r(\Lambda\ltimes X_0)$ via $\Gamma\curvearrowright I$.

Fix any $i\in I$. Denote by $\Pi_i: C_r^*(\Lambda\ltimes X_0)\hookrightarrow C^*_r(\Lambda^{(I)}\ltimes X)\cong \otimes_IC^*_r(\Lambda\ltimes X_0)$ the embedding as the $i$-th tensor factor. 

The following diagram shows the natural isomorphisms and inclusions we have between the above C$^*$-algebras. Note that the action on the bottom line is the generalized Bernoulli shift via $\Gamma\curvearrowright I$ and the two isomorphisms $\cong$ intertwine the corresponding actions.
		\begin{align}\label{figure1}
\xymatrix{
\Gamma\overset{\beta^*}{\curvearrowright} C^*_r(\delta(\Lambda_0^{(I)})\ltimes Y)\ar[d]^{\Phi^*}_{\cong}&\\
\Gamma\overset{S^*}{\curvearrowright}C^*_r(\Lambda^{(I)}\ltimes X) \ar[d]_{\cong}&C^*_r(\Lambda\ltimes X_0)\ar[l]\ar@{^{(}->}[dl]^{\Pi_i}\\
\Gamma\curvearrowright\otimes_IC^*_r(\Lambda\ltimes X_0)&}
\end{align}

Let us check that $\Phi^*(u_{\delta(\pi_i(\lambda))})$ is invariant under the action $S^*|_{Stab(i)}$.   

Recall that $\pi_i: \Lambda_0\hookrightarrow \Lambda_0^{(I)}$ denotes the embedding into the $i$-th component. It is clear that
$\gamma$ commutes with $\pi_i(\lambda)$ for all $\lambda\in \Lambda_0$ and all $\gamma\in Stab(i)$.
Fix any $\gamma\in Stab(i)$ and $(g, y)\in \delta(\Lambda_0^{(I)})\ltimes Y$. Recall that $u_{\delta(\pi_i(\lambda))}$ can be precisely described as a map on the groupoid $\delta(\Lambda_0^{(I)})\ltimes Y$, see the paragraph before Theorem \ref{thm: coe by isomorphic topological groupoids}. Then 
\begin{align*}
&(\beta^*_{\gamma}(u_{\delta(\pi_i(\lambda))}))(g, y)\\
&=u_{\delta(\pi_i(\lambda))}(\beta_{\gamma^{-1}}(g, y))~\quad\text{(since $\beta^*$ is induced by $\beta$)}\\
&\overset{\eqref{def: definition of beta-action}}{=}u_{\delta(\pi_i(\lambda))}(\delta(\gamma^{-1})g\delta(\gamma), \delta(\gamma^{-1})*y)\\
&=\begin{cases}
1,~&~\text{if $\delta(\pi_i(\lambda))=\delta(\gamma^{-1})g\delta(\gamma)$,}\\
0,~&~\text{otherwise}
\end{cases}\\
&=\begin{cases}
1,~&~\text{if $\delta(\pi_i(\lambda))=g$,}\\
0,~&~\text{otherwise}
\end{cases}~\quad\text{(since $\gamma$ commutes with $\pi_i(\lambda)$)}\\
&=u_{\delta(\pi_i(\lambda))}(g, y).
\end{align*}
Therefore, we have proved that $\beta^*_{\gamma}(u_{\delta(\pi_i(\lambda))})=u_{\delta(\pi_i(\lambda))}$. Hence,
\begin{align*}
S^*_{\gamma}\Phi^*(u_{\delta(\pi_i(\lambda))})
\overset{\eqref{relation: Phi intertwins S and beta}}{=}\Phi^*\beta^*_{\gamma}(u_{\delta(\pi_i(\lambda))})=\Phi^*(u_{\delta(\pi_i(\lambda))}).
\end{align*}
In other words, $\Phi^*(u_{\delta(\pi_i(\lambda))})$ is invariant under the action $S^*|_{Stab(i)}$. Since $\lambda\in \Lambda_0$ is arbitrary, we deduce that $\Phi^*(C_r^*(\delta(\pi_i(\Lambda_0))))$ is contained in the fixed point $C^*$-subalgebra for the action $S^*:~Stab(i)\curvearrowright C^*_r(\Lambda^{(I)}\ltimes X)$. 

From now on, to ease notations, we always identify $S^*$ with the generalized Bernoulli shift action $Stab(i)\curvearrowright \otimes_IC^*_r(\Lambda\ltimes X_0)$ via the isomorphism on the bottom line in the diagram \eqref{figure1}.

Since $\Gamma\curvearrowright I$ satisfies (S2) in the standing assumption, we may apply Lemma \ref{lem: fixed point algebra under the wm assumption} to conclude that
$\Phi^*(C_r^*(\delta(\pi_i(\Lambda_0))))\subset \Pi_i(C^*_r(\Lambda\ltimes X_0))$. See Remark \ref{remark: how to apply the fixed point subalgebra} for explanation.

Finally we can show $\delta$ is injective. 

Let $a\in Ker\delta<\Lambda_0^{(I)}$ (by step 5). Take a finite subset $J=\{i_1,\ldots, i_m\}\subset I$ such that $a\in \Lambda_0^J$. Write $b_k=\delta(\pi_{i_k}(a_{i_k}))$. Then $e=\delta(a)=b_1\cdots b_m$. Thus,
\[1=\Phi^*(u_e)=\Phi^*(u_{b_1})\cdots\Phi^*(u_{b_m}).\]
Note that each $\Phi^*(u_{b_k})$ is a unitary in $\Pi_{i_k}(C_r^*(\Lambda\ltimes X_0))$ as shown just now and the indices $i_k$ are distinct. If such a product of unitaries is equal to 1, we must have $\Phi^*(u_{b_k})=1$ for all $k$. Thus, $u_{b_k}=e$ and hence $\delta(\pi_{i_k}(a_{i_k}))=e$. Taking $g_k\in\Gamma$ such that $g_ki_0=i_k$, then 
\begin{align*}
e=\delta(\pi_{i_k}(a_{i_k}))=\delta(\pi_{g_ki_0}(a_{i_k}))
=\delta(g_k)\delta(\pi_{i_0}(a_{i_k})\delta(g_k^{-1})
=\delta(g_k)a_{i_k}\delta(g_k)^{-1}.
\end{align*}
Here, the last equality holds by the definition of $\delta: G_2\to G$ in step 3.
Therefore, $a_{i_k}=e$ for all $k$, so $a=e$. 

To sum up, we may draw the following commutative diagram to help remembering the situation. Recall that $\omega=\delta\circ \omega_2$ by the construction of $\omega_2$ in step 3. Moreover, $\delta$ is a group isomorphism by step 6 and step 7.
\begin{align}\label{figure2}
\xymatrix{
{\Lambda\wr_I\Gamma\times X}\ar[rr]^{\omega}\ar[d]_{\omega_2}&&G\\
G_2=\Lambda_0\wr_I\Gamma\ar[urr]_{\cong}^{\delta}&&
}
\end{align}

\textbf{Step 8}. We argue that there is an action $\rho: \Lambda_0\curvearrowright X_0$ such that 
the initial action $\Lambda\curvearrowright X_0$ is continuously orbit equivalent to $\rho$ via the identity homeomorphism on $X_0$ such that
$\omega_{i_0}: \Lambda\times X_0\to \Lambda_0=\Lambda_{i_0}$ is the associated orbit cocycle.

For any $\lambda_0\in \Lambda_0$ and $x_0\in X_0$, we can just define 
\begin{align}\label{def: definition of the new action rho}
\rho_{\lambda_0}(x_0)=\lambda x_0,~\text{where $\lambda\in \Lambda$ such that $\omega_{i_0}(\lambda, x_0)=\lambda_0$.}
\end{align}
 Let us show that such a $\lambda$ does exist and is uniquely determined by $x_0$ and $\lambda_0$.

 First, notice that $\Lambda\wr_I\Gamma \ni ga\mapsto \omega_2(ga, x)\in \Lambda_{i_0}\wr_I\Gamma=G_2$ is a bijection for any fixed $x\in X$. The reason is that both $\delta$  and $g\mapsto \omega(g, x)$ are bijections, see step 6, step 7 and \eqref{fact: the new omega(-, x) is a bijection}.
By restricting $\omega_2(-, x)$ to $\Lambda_{(i_0)}$, the image of $\Lambda$ under the inclusion $\Lambda\hookrightarrow\oplus_I\Lambda$ into the $i_0$-th component, we get the following bijection 
\[\Lambda_{(i_0)}\ni \lambda_{(i_0)}\mapsto \omega_2(\lambda_{(i_0)}, x)\overset{\eqref{def: def of omega_2}}{=}\pi_{i_0}(\omega_{i_0}(\lambda,x_{i_0}))\in \pi_{i_0}(\Lambda_{i_0}).\] 
 This is the same as saying $\Lambda\ni \lambda\mapsto \omega_{i_0}(\lambda, x_{i_0})\in \Lambda_{i_0}=\Lambda_0$ is a bijection for any $x_{i_0}\in X_0$. Therefore, there exists a unique $\lambda\in \Lambda$ such that $\omega_{i_0}(\lambda, x_0)=\lambda_0$.

It is routine to check that $\rho$ is continuous and is a free action. Moreover, there is a natural inverse cocycle $\omega_{i_0}': \Lambda_0\times X_0\to \Lambda$ by setting $\omega_{i_0}'(\lambda_0, x_0)=\lambda$, whenever $\omega_{i_0}(\lambda, x_0)=\lambda_0$. Clearly, $(\omega_{i_0}, \omega_{i_0}', id)$ forms a coe coupling between $\Lambda\curvearrowright X_0$ and $\rho:~\Lambda_0\curvearrowright X_0$.

Denote by $G_2=\Lambda_0\wr_I\Gamma\overset{\hat{\star}}{\curvearrowright} X_0^I$ the natural action induced by $\rho: \Lambda_0\curvearrowright X_0$. Recall that $\hat{\star}$ is defined as follows: for each $a=(\pi_i(a_i))_{i\in I}\in \Lambda_0^{(I)}$, $g\in\Gamma$ and $x\in X_0^I$, where $a_i\in \Lambda_0$ for all $i\in I$, we have 
\begin{align}\label{def: definition of the hat-star action}
(g~ \hat{\star} ~x)_i=x_{g^{-1}i},~
(a ~\hat{\star}~ x)_i=\rho_{a_i}(x_i),~\forall~i\in I.
\end{align}

\textbf{Step 9}. We are left to argue that the above action $\hat{\star}$ is topologically conjugate to the initial action $G\overset{*}{\curvearrowright} Y=X_0^I$.

The idea is to check that the two actions $\hat{\star}$ and $\star$ are topologically conjugate via the group isomorphism $\delta: G_2\to G$ and the homeomorphism $\phi: X\rightarrow Y$. Recall that $(\omega, \omega', \phi)$ is a coe coupling for the continuous orbit equivalence between the two initial actions $\Lambda\wr_I\Gamma\overset{\star}{\curvearrowright} X$ and $G\overset{*}{\curvearrowright}Y$. 

Fix any $g\in \Gamma$ and $x\in X$. Clearly, $g~\hat{\star}~ x=g\star x$. 
Recall that $\omega=\delta\circ \omega_2$ (as shown on the diagram \eqref{figure2}) and $\omega_2(g, x)=g$ for all $g\in \Gamma$ by \eqref{def: def of omega_2}. A calculation shows that
\begin{align}\label{eq: last eq1}
\phi(g~\hat{\star}~ x)=\phi(g\star x)=\omega(g, x)*\phi(x)=\delta(\omega_2(g, x))*\phi(x)=\delta(g)*\phi(x).
\end{align}
Next, fix any $a=(a_i)_{i\in I}\in \Lambda_0^{(I)}$. Recall that $\Lambda\wr_I\Gamma\ni a'\mapsto \omega_2(a', x)\to G_2$ is a bijection as observed in step 8. We may take $a'\in \Lambda\wr_I\Gamma$ such that $\omega_2(a', x)=a$. By \eqref{property: property of omega_2}, we actually have $a'\in \Lambda^{(I)}$ and $a_i=\omega_{i_0}(a'_i, x_i)$ for all $i\in I$, where we have written $a'=(a_i')_{i\in I}\in \Lambda^{(I)}$.
Then we can deduce that
\begin{align}\label{eq: last eq2}
\phi(a~\hat{\star}~ x)=\phi(a'\star x)=\delta(a)*\phi(x).
\end{align}
To see this, for every $i\in I$, we compute
\begin{align*}
(a~\hat{\star}~ x)_i&\overset{\eqref{def: definition of the hat-star action}}{=}\rho_{a_i}(x_i)
\\
&=a_i'x_i~\quad\text{(by \eqref{def: definition of the new action rho} and $\omega_{i_0}(a_i', x_i)=a_i$)}\\
&=(a'\star x)_i.
\end{align*}
Hence, $a~\hat{\star}~ x=a'\star x$ and $\phi(a~\hat{\star}~x)=\phi(a'\star x)$. On the other hand,
\begin{align*}
\delta(a)*\phi(x)&=\delta(\omega_2(a', x))*\phi(x)~\quad\text{(since $a=\omega_2(a', x)$)}\\
&=\omega(a', x)*\phi(x)~\quad\text{(since $\omega=\delta\circ\omega_2$)}\\
&=\phi(a'\star x)~\quad\text{(since $(\omega, \omega', \phi)$ is a coe coupling)}.
\end{align*}
Since $G_2$ is generated by $\Gamma$ and $\Lambda_0^{(I)}$, the proof is done by \eqref{eq: last eq1} and \eqref{eq: last eq2}.
\end{proof}

Now, we can prove Theorem \ref{thm: main thm for the whole paper} easily.

\begin{proof}[Proof of Theorem \ref{thm: main thm for the whole paper}]
The left-right wreath product action $\frac{\mathbb{Z}}{p\mathbb{Z}}\wr_{\Gamma_1}(\Gamma_1\times \Gamma_1)\curvearrowright X$ is topologically free and minimal by applying 
Lemma \ref{lem: topological freeness for generalized wreath product actions} and Remark \ref{remark: minimality for generalized wreath product actions}.

By Proposition \ref{prop: left-right translation actions satisfy the standing assumption}, the left-right shift $\Gamma:=\Gamma_1\times \Gamma_1\curvearrowright \Gamma_1:=I$ satisfies the standing assumption. By  Corollary \ref{main corollary} (ii), the generalized full shift $\Gamma_1\times \Gamma_1\curvearrowright X=(\frac{\mathbb{Z}}{p\mathbb{Z}})^{\Gamma_1}$ is a continuous cocycle superrigid action. Hence the three conditions in Theorem \ref{thm: main thm for generalized wreath product actions} are satisfied, and we may apply Corollary \ref{main corollary2} to finish the proof.
\end{proof}

We remark that the action in Theorem \ref{thm: main thm for the whole paper} is not free since the subaction $\Gamma_1\times \Gamma_1\curvearrowright X$ is not free.

\section{Questions}\label{section: ending questions}

Related to this paper, the following questions might be worth studying.

1. Determine conditions on $\Gamma\curvearrowright X_0^I$ such that the conclusion in Theorem \ref{thm: ccsr for generalized full shifts} holds for it.

2. Does the conclusion in Theorem \ref{thm: ccsr for generalized full shifts} still hold true for 
infinite compact bottom space $X_0$?

\subsection*{Acknowledgements}

This work is supported by NSFC grant no. 12001081. We are very grateful to Dr. Daniel Drimbe for explaining the proof of Theorem 5.1 in \cite{dv} patiently to us and answering our questions. We also thank Professor Nhan-Phu Chung for the collaboration on \cite{cj, cj_etds}, which contain some initial ideas for proving Theorem \ref{thm: ccsr for generalized full shifts}, see Remark \ref{remark: relation to our previous work}. We also thank the referee a lot for reading the paper very carefully, pointing out lots of typos and inaccuracies and providing many helpful suggestions for improving the exposition greatly.

\begin{bibdiv}
\begin{biblist}

\bib{AP}{book}{
    AUTHOR = {Anantharaman-Delaroche, C.},
    author={Popa, S.},
     TITLE = {An introduction to II$_1$ factors},
 status = {book preprint, available at \url{https://www.math.ucla.edu/~popa/Books/IIunV15.pdf}},
      YEAR = {2018},
     }

\bib{BTD}{article}{
   author={Bowen, L.},
   author={Tucker-Drob, R.},
   title={Superrigidity, measure equivalence, and weak Pinsker entropy},
   journal={Groups Geom. Dyn.},
   volume={16},
   date={2022},
   number={1},
   pages={247--286},}

\bib{BT}{article}{
   author={Boyle, M.},
   author={Tomiyama, J.},
   title={Bounded topological orbit equivalence and $C^*$-algebras},
   journal={J. Math. Soc. Japan},
   volume={50},
   date={1998},
   number={2},
   pages={317--329},}

\bib{CK15}{article}{
   author={Chifan, I.},
   author={Kida, Y.},
   title={$OE$ and $W^*$ superrigidity results for actions by surface braid
   groups},
   journal={Proc. Lond. Math. Soc. (3)},
   volume={111},
   date={2015},
   number={6},
   pages={1431--1470},}

\bib{cj}{article}{
author={Chung, N.-P.},
author={Jiang, Y.},
title={Continuous Cocycle Superrigidity for Shifts and Groups with One End},
journal={Math. Ann.},
   volume={368},
   date={2017},
   number={3-4},
   pages={1109--1132},
}

\bib{cj_etds}{article}{
   author={Chung, N.-P.},
   author={Jiang, Y.},
   title={Divergence, undistortion and H\"{o}lder continuous cocycle
   superrigidity for full shifts},
   journal={Ergodic Theory Dynam. Systems},
   volume={41},
   date={2021},
   number={8},
   pages={2274--2293},}

\bib{cohen}{article}{
   author={Cohen, D. B.},
   title={Continuous cocycle superrigidity for the full shift over a
   finitely generated torsion group},
   journal={Int. Math. Res. Not. IMRN},
   date={2020},
   number={6},
   pages={1610--1620},}

\bib{CM}{article}{
   author={Cortez, M. I.},
   author={Medynets, K.},
   title={Orbit equivalence rigidity of equicontinuous systems},
   journal={J. Lond. Math. Soc. (2)},
   volume={94},
   date={2016},
   number={2},
   pages={545--556},}

\bib{Dr15}{article}{
   author={Drimbe, D.},
   title={Cocycle and orbit equivalence superrigidity for coinduced actions},
   journal={Ergodic Theory Dynam. Systems},
   volume={38},
   date={2018},
   number={7},
   pages={2644--2665},}
   
   \bib{Dr20}{article}{
   author={Drimbe, D.},
   title={Orbit equivalence rigidity for product actions},
   journal={Comm. Math. Phys.},
   volume={379},
   date={2020},
   number={1},
   pages={41--59},}

\bib{DIP}{article}{
author={Drimbe, D.},
author={Ioana, A.},
author={Peterson, J.},
title={Cocycle superrigidity for profinite actions of irreducible lattices},
status={Groups Geom. Dyn., to appear},
}

\bib{dv}{article}{
author={Drimbe, D.},
author={Vaes, S.},
title={Superrigidity for dense subgroups of Lie groups and their actions on homogeneous spaces},
status={Math. Ann., to appear},
}

\bib{dye}{article}{
   author={Dye, H. A.},
   title={On groups of measure preserving transformations. I},
   journal={Amer. J. Math.},
   volume={81},
   date={1959},
   pages={119--159},}

\bib{Fur99a}{article}{
   author={Furman, A.},
   title={Gromov's measure equivalence and rigidity of higher rank lattices},
   journal={Ann. of Math. (2)},
   volume={150},
   date={1999},
   number={3},
   pages={1059--1081},}

\bib{Fur99b}{article}{
   author={Furman, A.},
   title={Orbit equivalence rigidity},
   journal={Ann. of Math. (2)},
   volume={150},
   date={1999},
   number={3},
   pages={1083--1108},}

\bib{GITD16}{article}{
   author={Gaboriau, D.},
   author={Ioana, A.},
   author={Tucker-Drob, R.},
   title={Cocycle superrigidity for translation actions of product groups},
   journal={Amer. J. Math.},
   volume={141},
   date={2019},
   number={5},
   pages={1347--1374},}

\bib{GPS}{article}{
   author={Giordano, T.},
   author={Putnam, I. F.},
   author={Skau, C. F.},
   title={$\Bbb{Z}^d$-odometers and cohomology},
   journal={Groups Geom. Dyn.},
   volume={13},
   date={2019},
   number={3},
   pages={909--938},}

\bib{GH}{article}{
author={Guirardel, V.},
author={Horbez, C.},
title={Measure equivalence rigidity of $Out(F_n)$},
status={arXiv: 2103.03696},
}

\bib{HenH}{article}{
author={Hensel, S.},
author={Horbez, C.},
title={Measure equivalence rigidity of the handlebody groups},
status={arXiv: 2111.10064},
}

\bib{HH1}{article}{
   author={Horbez, C.},
   author={Huang, J.},
   title={Measure equivalence classification of transvection-free
   right-angled Artin groups},
   language={English, with English and French summaries},
   journal={J. \'{E}c. polytech. Math.},
   volume={9},
   date={2022},
   pages={1021--1067},}

\bib{HH2}{article}{
   author={Horbez, C.},
   author={Huang, J.},
title={Orbit equivalence rigidity of irreducible actions of right-angled Artin groups},
status={arXiv: 2110.04141},
}
\bib{HH3}{article}{
author={Horbez, C.},
author={Huang, J.},
title={Measure equivalence rigidity among the Higman groups},
status={arXiv: 2206.00884},   
}

\bib{Io08}{article}{
   author={Ioana, A.},
   title={Cocycle superrigidity for profinite actions of property (T)
   groups},
   journal={Duke Math. J.},
   volume={157},
   date={2011},
   number={2},
   pages={337--367},}   

\bib{Io14}{article}{
   author={Ioana, A.},
   title={Strong ergodicity, property (T), and orbit equivalence rigidity
   for translation actions},
   journal={J. Reine Angew. Math.},
   volume={733},
   date={2017},
   pages={203--250},}

\bib{jiang_pams}{article}{
   author={Jiang, Y.},
   title={Continuous cocycle superrigidity for coinduced actions and
   relative ends},
   journal={Proc. Amer. Math. Soc.},
   volume={147},
   date={2019},
   number={1},
   pages={315--326},}

\bib{jiang_ggd}{article}{
author={Jiang, Y.},
title={On continuous orbit equivalence rigidity for virtually cyclic group actions},
status={Groups Geom. Dyn., to appear},
}

\bib{KR_II}{book}{
   author={Kadison, R. V.},
   author={Ringrose, J. R.},
   title={Fundamentals of the theory of operator algebras. Vol. II},
   series={Pure and Applied Mathematics},
   volume={100},
   note={Advanced theory},
   publisher={Academic Press, Inc., Orlando, FL},
   date={1986},
   pages={i--xiv and 399--1074},}

\bib{KL_book}{book}{
   author={Kerr, D.},
   author={Li, H.},
   title={Ergodic theory Independence and dichotomies},
   series={Springer Monographs in Mathematics},
   publisher={Springer, Cham},
   date={2016},
   pages={xxxiv+431},
   }
   
\bib{Ki06}{article}{
   author={Kida, Y.},
   title={Measure equivalence rigidity of the mapping class group},
   journal={Ann. of Math. (2)},
   volume={171},
   date={2010},
   number={3},
   pages={1851--1901},}   
   
   \bib{Ki09}{article}{
   author={Kida, Y.},
   title={Rigidity of amalgamated free products in measure equivalence},
   journal={J. Topol.},
   volume={4},
   date={2011},
   number={3},
   pages={687--735},}
   
\bib{LX}{article}{
   author={Li, X.},
   title={Continuous orbit equivalence rigidity},
   journal={Ergodic Theory Dynam. Systems},
   volume={38},
   date={2018},
   number={4},
   pages={1543--1563},}
   
\bib{LX_agt}{article}{
   author={Li, X.},
   title={Dynamic characterizations of quasi-isometry and applications to
   cohomology},
   journal={Algebr. Geom. Topol.},
   volume={18},
   date={2018},
   number={6},
   pages={3477--3535},}

\bib{liv1}{article}{
   author={Liv\v{s}ic, A. N.},
   title={Certain properties of the homology of $Y$-systems},
   language={Russian},
   journal={Mat. Zametki},
   volume={10},
   date={1971},
   pages={555--564},}   
   
   \bib{liv2}{article}{
   author={Liv\v{s}ic, A. N.},
   title={Cohomology of dynamical systems},
   language={Russian},
   journal={Izv. Akad. Nauk SSSR Ser. Mat.},
   volume={36},
   date={1972},
   pages={1296--1320},}
   
\bib{MS02}{article}{
   author={Monod, N.},
   author={Shalom, Y.},
   title={Orbit equivalence rigidity and bounded cohomology},
   journal={Ann. of Math. (2)},
   volume={164},
   date={2006},
   number={3},
   pages={825--878},}   
   
\bib{np}{article}{
   author={Nicol, M.},
   author={Pollicott, M.},
   title={Measurable cocycle rigidity for some non-compact groups},
   journal={Bull. London Math. Soc.},
   volume={31},
   date={1999},
   number={5},
   pages={592--600},}   
   
  \bib{pp}{article}{
   author={Parry, W.},
   author={Pollicott, M.},
   title={The Liv\v{s}ic cocycle equation for compact Lie group extensions of
   hyperbolic systems},
   journal={J. London Math. Soc. (2)},
   volume={56},
   date={1997},
   number={2},
   pages={405--416},} 
   
\bib{PS09}{article}{
   author={Peterson, J.},
   author={Sinclair, T.},
   title={On cocycle superrigidity for Gaussian actions},
   journal={Ergodic Theory Dynam. Systems},
   volume={32},
   date={2012},
   number={1},
   pages={249--272},}   
   
\bib{pw}{article}{
   author={Pollicott, M.},
   author={Walkden, C. P.},
   title={Liv\v{s}ic theorems for connected Lie groups},
   journal={Trans. Amer. Math. Soc.},
   volume={353},
   date={2001},
   number={7},
   pages={2879--2895},}  
   
\bib{py}{article}{
   author={Pollicott, M.},
   author={Yuri, M.},
   title={Regularity of solutions to the measurable Livsic equation},
   journal={Trans. Amer. Math. Soc.},
   volume={351},
   date={1999},
   number={2},
   pages={559--568},}   
   
  \bib{popa_cocycle1}{article}{
   author={Popa, S.},
   title={Cocycle and orbit equivalence superrigidity for malleable actions
   of $w$-rigid groups},
   journal={Invent. Math.},
   volume={170},
   date={2007},
   number={2},
   pages={243--295},}
   
   \bib{popa_cocycle2}{article}{
   author={Popa, S.},
   title={On the superrigidity of malleable actions with spectral gap},
   journal={J. Amer. Math. Soc.},
   volume={21},
   date={2008},
   number={4},
   pages={981--1000},}

\bib{PV_adv}{article}{
   author={Popa, S.},
   author={Vaes, S.},
   title={Strong rigidity of generalized Bernoulli actions and computations
   of their symmetry groups},
   journal={Adv. Math.},
   volume={217},
   date={2008},
   number={2},
   pages={833--872},}
   
   \bib{PV08}{article}{
   author={Popa, S.},
   author={Vaes, S.},
   title={Cocycle and orbit superrigidity for lattices in ${\rm SL}(n,\Bbb
   R)$ acting on homogeneous spaces},
   conference={
      title={Geometry, rigidity, and group actions},
   },
   book={
      series={Chicago Lectures in Math.},
      publisher={Univ. Chicago Press, Chicago, IL},
   },
   date={2011},
   pages={419--451},}

\bib{quas}{article}{
   author={Quas, A. N.},
   title={Rigidity of continuous coboundaries},
   journal={Bull. London Math. Soc.},
   volume={29},
   date={1997},
   number={5},
   pages={595--600},}

\bib{Ren}{book}{
   author={Renault, J.},
   title={A groupoid approach to $C^{\ast} $-algebras},
   series={Lecture Notes in Mathematics},
   volume={793},
   publisher={Springer, Berlin},
   date={1980},
   pages={ii+160},}

\bib{Rudin}{book}{
   author={Rudin, W.},
   title={Real and complex analysis},
   edition={3},
   publisher={McGraw-Hill Book Co., New York},
   date={1987},
   pages={xiv+416},}

\bib{SC95}{article}{
   author={Schmidt, K.},
   title={The cohomology of higher-dimensional shifts of finite type},
   journal={Pacific J. Math.},
   volume={170},
   date={1995},
   number={1},
   pages={237--269},}

\bib{SC}{article}{
   author={Schmidt, K.},
   title={Remarks on Liv\v{s}ic' theory for nonabelian cocycles},
   journal={Ergodic Theory Dynam. Systems},
   volume={19},
   date={1999},
   number={3},
   pages={703--721},}
   
  \bib{SSW}{collection}{
   author={Sims, A.},
   author={Szab\'{o}, G.},
   author={Williams, D.},
   title={Operator algebras and dynamics: groupoids, crossed products, and
   Rokhlin dimension},
   series={Advanced Courses in Mathematics. CRM Barcelona},
   publisher={Birkh\"{a}user/Springer, Cham},
   date={2020},
   pages={x+163},} 
   
\bib{Tak_book}{book}{
   author={Takesaki, M.},
   title={Theory of operator algebras. III},
   series={Encyclopaedia of Mathematical Sciences},
   volume={127},
   note={Operator Algebras and Non-commutative Geometry, 8},
   publisher={Springer-Verlag, Berlin},
   date={2003},
   pages={xxii+548},}   
   
   \bib{TD14}{article}{
   author={Tucker-Drob, R.},
   title={Invariant means and the structure of inner amenable groups},
   journal={Duke Math. J.},
   volume={169},
   date={2020},
   number={13},
   pages={2571--2628},}

 \bib{wp}{article}{
   author={Walters, P.},
   title={Topological Wiener-Wintner ergodic theorems and a random $L^2$
   ergodic theorem},
   journal={Ergodic Theory Dynam. Systems},
   volume={16},
   date={1996},
   number={1},
   pages={179--206},}

\end{biblist}
\end{bibdiv}

\end{document}